\renewcommand{\AA}{\mathbb{A}}
\newcommand{\CC}{\mathbb{C}}
\newcommand{\DD}{\mathbb{D}}
\newcommand{\NN}{\mathbb{N}}
\newcommand{\PP}{\mathbb{P}}
\newcommand{\RR}{\mathbb{R}}
\newcommand{\VV}{\mathbb{V}}
\newcommand{\ZZ}{\mathbb{Z}}
\newcommand{\hH}{\mathcal{H}}
\newcommand{\nN}{\mathcal{N}}
\newcommand{\oO}{\mathcal{O}}
\newcommand{\rR}{\mathcal{R}}
\newcommand{\picbar}{\overline{\mathrm{Pic}}}
\newtheorem{lemma}{Lemma}
\newtheorem{proposition}[lemma]{Proposition}
\newtheorem{corollary}[lemma]{Corollary}
\newtheorem{theorem}[lemma]{Theorem}
\newtheorem*{thma}{Theorem A}
\newtheorem*{thmap}{Theorem A'}
\newtheorem*{thmb}{Theorem B}
\theoremstyle{definition}
\newtheorem{definition}[lemma]{Definition}
\theoremstyle{remark} 
\newtheorem*{remark}{Remark}
\newtheorem*{example}{Example}
\newtheorem{conjecture}[lemma]{Conjecture}
\DeclareMathOperator{\ord}{ord}
\title[Hilbert schemes of points on a locally planar curve and... its versal deformation]{Hilbert schemes of points on a locally planar curve \\
and the Severi strata of its versal deformation}
\author{Vivek Shende}
\address{Department of Mathematics, 
Massachusetts Institute of Technology,
Cambridge, MA 02139}
\begin{document}

\begin{abstract}
  Let $C$ be a locally planar curve.  Its versal deformation 
  admits a stratification
  by the genera of the fibres.  The strata are singular; we show
  that their multiplicities at the central point are determined by 
  the Euler numbers of the Hilbert schemes of points
  on $C$. 

  These Euler numbers have made two prior appearances.  
  First, in certain simple cases, they control the contribution of $C$ to
  the Pandharipande-Thomas curve counting invariants of three-folds.  
  In this context, our result identifies the strata multiplicities as the
  local contributions to the Gopakumar-Vafa BPS invariants.  
  Second, when $C$ is smooth away from
  a unique singular point, a conjecture
  of Oblomkov and the present author identifies the 
  Euler numbers of the Hilbert schemes
  with the ``U($\infty$)'' invariant of the link of the singularity.  
  We make contact with combinatorial ideas of Jaeger, 
  and suggest an approach to the conjecture. 
\end{abstract}

\maketitle

\section{Introduction}

Let $\mathcal{C} \to \Lambda$ be a projective flat 
family of integral, locally planar, 
complex algebraic curves over a smooth base.  The fibres
necessarily share the same arithmetic genus $g$, and it is known 
\cite{DH,T} that
the geometric genus gives a lower semicontinuous function 
$\tilde{g}:\Lambda \to \ZZ$.  For $ h \le g$ we write
\[\Lambda_h = \{ \lambda \in \Lambda\,|\, \mathcal{C}_\lambda \,
\mbox{is of geometric genus} \le h\}\]
This gives a stratification by closed subvarieties
\[\Lambda_0 \subset \cdots \subset
\Lambda_g = \Lambda\]
By semicontinuity, $\lambda \notin \Lambda_h$  unless 
$\tilde{g}(\lambda) \le h$. 
By convention we take $\Lambda_{h} = \emptyset$ for any $h > g$. 
 
We say the family is {\em locally versal at $\lambda \in \Lambda$} 
when the induced deformations of the germs of the singular points
of $\mathcal{C}_\lambda$ are versal.  We recall properties of
versal deformations of singularities in Section \ref{sec:smoothness} and
refer to \cite{GLS} for a detailed treatement. 
At a locally versal point $\lambda$, 
in the range $\tilde{g}(\lambda) \le h \le g$,
it is known that the stratum $\Lambda_h$ is nonempty of 
of pure codimension $g - h$ and is the closure of the 
locus $\Lambda^+_h$ of curves with $g-h$ nodes \cite{DH,T}. 
While $\Lambda^+_h$ is smooth, $\Lambda_h$ will
generally be singular.  We are interested in the multiplicities
$\deg_\lambda \Lambda_h$, i.e., 
the number of points near $\lambda$ in which $\Lambda_h$
intersects a generic space of the appropriate codimension.  For instance,
since $\Lambda_g = \Lambda$ is smooth by assumption, 
\begin{equation} 
  \label{eq:vg}
  \deg_{\lambda} \, \Lambda_{g} = 1
\end{equation}

The multiplicities depend
only on the singularities of $\mathcal{C}_\lambda$.  
Denote the germs of the singularities
by $c_i$, their respective contributions to $g - \tilde{g}(\lambda)$ 
by $\delta(c_i)$, 
and the bases of their miniversal deformations by $\VV(c_i)$. 
Fix $\pi: \Lambda \to \prod \VV(c_i)$ compatible with the 
deformations of $c_i$ induced by $\Lambda$; it is unique up to
first order and smooth by local versality of $\Lambda$ \cite[p. 237]{GLS}.
We write 
$\VV_h^+(c_i)$ for the locus in $\VV(c_i)$ where 
the fibres are smooth away from exactly $\delta(c_i) - h$ nodes, and
$\VV_h(c_i)$ for its closure. 
The stratifications are compatible: 
$\Lambda_{\tilde{g}+h}= 
\pi^{-1}\left( \bigcup_{h = \sum h_i} \prod_i \VV_{h_i}(c_i) \right) $.  
Since $\pi$ is a smooth morphism, 
\begin{equation} 
\label{eq:multprodformula}
\deg_{\lambda} \Lambda_{\tilde{g}+h} \,\, =\! 
 \sum_{h = \sum h_i} \prod_i \deg_{[c_i]}
 \VV_{h_i}(c_i) 
\end{equation}

In particular, the multiplicity of 
$\Lambda_{g-1}$ is given by the sum of the multiplicities of the 
discriminant loci in the $\VV(c_i)$.  These are the Milnor numbers
$\mu(c_i)$.  If
$c_i$ be the  germ of $f(x,y) = 0$ at $(0,0)$, then
for sufficiently general $g(x,y)$, the function
$f(x,y) + \epsilon g(x,y)$ 
has only simple critical points in a neighborhood of $(0,0)$
and moreover only one in each fibre $(f+\epsilon g)^{-1}(t)$.
The total number of critical points is by definition $\mu(c_i)$.
The family of
curves $C_{s,t} = \{(x,y)\,|\,f + s g = t\}$ induces a map from the germ at
zero in the $(s,t)$-plane to $\VV(c_i)$, 
and the line $s = \epsilon$ intersects
the discriminant locus in the $\mu(c_i)$ values of $t$ for which
$(f+sg)^{-1}(t)$ acquires a node.

Let $b(c_i)$ be the number of analytic local
branches. Milnor has shown \cite[Theorem 10.5]{M} that
$\mu(c_i) = 2 \delta(c_i) + 1 - b(c_i)$. 
Therefore
$\chi(\mathcal{C}_\lambda) = 2 - 2\tilde{g} + \sum (1- b(c_i)) = 
2 - 2g + \sum \mu(c_i)$, and so: 
\begin{equation}
  \label{eq:vgminus1}
 \deg_\lambda \Lambda_{g-1} = \chi(\mathcal{C}_\lambda) + 2g - 2
\end{equation} 

One expects that going to deeper strata will lead to 
increasingly difficult calculations.  
Nonetheless, Fantechi, G\"ottsche, and van Straten \cite{FGvS} 
showed that
the multiplicity of the deepest stratum  is equal to the
topological Euler number of the compactified Jacobian
$\picbar^0(\mathcal{C}_\lambda)$.  This space is described in detail 
in \cite{AK}; it
parameterizes torsion free, rank one, degree zero sheaves 
on $\mathcal{C}_\lambda$.
\begin{equation}
  \label{eq:v0}
  \deg_\lambda \Lambda_0 = \chi\left(\picbar^0(\mathcal{C}_\lambda)\right)
\end{equation}

Unless $\mathcal{C}_\lambda$ is rational, 
both sides of Equation (\ref{eq:v0}) vanish
-- the left because $\lambda \notin V_0$ by semicontinuity, 
and the right because 
the compactified Jacobian is topologically a product of the Jacobian of the
normalization of $\mathcal{C}_\lambda$ 
and factors coming from the singularities.
However, if $\bar{c}_i$ is a rational curve smooth
away from a singularity analytically isomorphic to $c_i$, then
Equations (\ref{eq:multprodformula}) and (\ref{eq:v0}) imply:
\begin{equation}
  \label{eq:vgg}
  \deg_\lambda \Lambda_{\tilde{g}(\lambda)} = \prod_i 
  \chi \left(\picbar^0(\overline{c}_i)\right)
\end{equation}

Our main result interpolates between
Equations (\ref{eq:vg}), (\ref{eq:vgminus1}), (\ref{eq:v0}), and (\ref{eq:vgg}).
We will need the Hilbert schemes of points, 
$X^{[n]} = \{ \mbox{ zero dimensional subschemes of $X$ of  length $n$ } \}$.

\begin{thma}
Let $\mathcal{C} \to \Lambda$ be a family
of complete, integral, locally planar curves of arithmetic genus $g$.  If
the family is locally versal at 
$\lambda \in \Lambda$, then:
\[
\sum_{n=0}^\infty q^{n} \chi( \mathcal{C}_{\lambda}^{[n]}) \,\, = \,
\sum_{h = \tilde{g}}^{g} q^{g-h} 
(1 - q)^{2h-2} \,\, \deg_\lambda \Lambda_h  \]
\end{thma}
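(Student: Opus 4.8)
The plan is to reduce the global statement to a purely local identity attached to each singularity, and then to attack that identity by a degeneration inside the versal base which upgrades the Fantechi--G\"ottsche--van Straten theorem (\ref{eq:v0}) from the single number $\deg_\lambda \Lambda_0$ to the entire generating series.

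First I would establish the multiplicativity of the left-hand side. Writing $C = \mathcal{C}_\lambda$ and stratifying a length-$n$ subscheme according to how its length is distributed among the singular points $c_i$ and the smooth locus $C^{\mathrm{sm}}$ gives a locally closed decomposition
\[ C^{[n]} = \bigsqcup_{n = m + \sum_i m_i} \Hilb^m(C^{\mathrm{sm}}) \times \prod_i \Hilb^{m_i}_{c_i}, \]
where $\Hilb^{m_i}_{c_i}$ is the punctual Hilbert scheme of the germ $c_i$. Since $C^{\mathrm{sm}}$ is smooth, $\Hilb^m(C^{\mathrm{sm}}) = \mathrm{Sym}^m C^{\mathrm{sm}}$ and $\sum_m \chi(\mathrm{Sym}^m C^{\mathrm{sm}}) q^m = (1-q)^{-\chi(C^{\mathrm{sm}})}$; additivity of $\chi$ over the strata and multiplicativity over products then yield, using $\chi(C^{\mathrm{sm}}) = 2 - 2\tilde{g} - \sum_i b(c_i)$,
\[ \sum_n \chi(C^{[n]}) q^n = (1-q)^{2\tilde{g}-2} \prod_i \widehat{Z}_{c_i}(q), \quad \widehat{Z}_{c}(q) := (1-q)^{b(c)} \sum_m \chi(\Hilb^m_{c}) q^m. \]
On the other side, substituting $h = \tilde{g}+k$ and inserting the product formula (\ref{eq:multprodformula}) factors the right-hand side as $(1-q)^{2\tilde{g}-2}\prod_i W_{c_i}(q)$, with $W_{c}(q) := \sum_{k=0}^{\delta(c)} q^{\delta(c)-k}(1-q)^{2k}\deg_{[c]}\VV_k(c)$. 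The theorem is therefore equivalent to the local identity $\widehat{Z}_{c}(q) = W_{c}(q)$ for every germ $c$, i.e.\ to the case of a rational curve with a single singular point. As a base case one checks $\widehat{Z}_{\mathrm{node}}(q) = 1 - q + q^2 = W_{\mathrm{node}}(q)$ directly, which by the same multiplicativity already proves the theorem for every nodal curve.

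Equivalently I would prove the identity by conservation of Euler characteristic for the relative Hilbert scheme $\mathcal{C}^{[n]} \to \Lambda$. Over the open stratum $\Lambda^+_h$ the fibres are $(g-h)$-nodal of fixed geometric genus $h$, so $\mathcal{C}^{[n]}$ is a topological fibre bundle there and, by the multiplicativity above, the fibrewise series is $(1-q)^{2h-2}(1-q+q^2)^{g-h}$. The assertion to be extracted is that the central fibre $C^{[n]}$ assembles these generic Hilbert schemes with each stratum $\Lambda_h$ weighted by the number $\deg_\lambda \Lambda_h$ of its points on a generic transverse slice through $\lambda$, and that the net contribution of such a stratum to the generating series is precisely $q^{g-h}(1-q)^{2h-2}\deg_\lambda\Lambda_h$: here $(1-q)^{2h-2}$ is the Macdonald series of a smooth genus-$h$ curve and the shift $q^{g-h}$ records the length forced onto the $g-h$ vanishing nodes.

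The main obstacle is to make this conservation precise. Constancy of $\chi(\mathcal{C}^{[n]}_v)$ on each open stratum is routine, but controlling the specialisation of these fibrewise Euler numbers to the central fibre -- and proving that the discrepancy is organised exactly by the strata multiplicities with the stated $q$-weights -- is the crux, since $\chi(C^{[n]})$ is genuinely not deformation invariant. I expect the right tool to be the Abel--Jacobi morphism $C^{[n]} \to \picbar^n(C)$, which for $n \gg 0$ is a $\mathbb{P}^{n-g}$-bundle: this both exhibits the rational form of the series and should transport the degeneration argument behind (\ref{eq:v0}) -- which controls only the deepest stratum and the leading pole at $q=1$ -- to the remaining strata, so that one generating function captures what FGvS extracted a single coefficient at a time. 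The endpoint formulas (\ref{eq:vg}), (\ref{eq:vgminus1}), (\ref{eq:v0}) and (\ref{eq:vgg}) then serve as consistency checks at the extreme strata.
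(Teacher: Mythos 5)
Your reduction to the local statement is correct and is in fact the same reduction the paper makes: your multiplicativity computation is Proposition \ref{prop:bpsmultiplicativity}, and the equivalence of the global and local identities is Corollary \ref{cor:equiv}. But the second half of your proposal stops exactly at the point where the paper's real work begins, and you say so yourself: you identify the specialisation of Euler characteristics to the central fibre as ``the crux'' and then offer only the hope that the Abel--Jacobi morphism will ``transport the degeneration argument behind (\ref{eq:v0}).'' That hope does not survive inspection. The Abel--Jacobi map $C^{[n]} \to \picbar^n(C)$ is a $\PP^{n-g}$-bundle only for $n \ge 2g-1$; in that range the Hilbert schemes carry no more information than the compactified Jacobian, i.e.\ no more than the single number $n_0$ that FGvS already computed. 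The coefficients $n_h$ for intermediate $h$ are pinned down by the \emph{small} Hilbert schemes $C^{[n]}$ with $n \le g-h$, precisely where the Abel--Jacobi fibres $\PP(\mathrm{H}^0(F))$ jump and no bundle argument applies. Correspondingly, the smoothness one can extract from FGvS via the Abel--Jacobi comparison (Proposition \ref{prop:smoothcompare}, Corollary \ref{cor:smoothcompare}) holds only over generic discs of dimension $k \ge \delta$, which is useless for isolating $\Lambda_h$: that requires slicing by discs of dimension exactly $g-h$.

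The missing ingredient is Theorem B (Theorem \ref{thm:smooth}/Corollary \ref{cor:smooth}): over a \emph{generic} polydisc $\DD^k$ through $\lambda$, the total space $\mathcal{C}^{[h]}_{\DD^k}$ is smooth for all $h \le k$. This is proved not through the Jacobian but by a tangent-space computation in the versal family (Sequence (\ref{eq:defseq})) together with the genuinely new transversality statement, Lemma \ref{lem:transversetoideals}: a generic $k$-dimensional subspace of $\mathcal{T}^1$ is simultaneously transverse to \emph{every} ideal of colength $\le k$, proved via the semigroup filtration of the normalisation. With this in hand the paper's argument sidesteps fibrewise Euler characteristics entirely (which, as you note, are not deformation invariant): one takes two parallel slices $D_0, D_\epsilon \cong \DD^{g-h}$, with $D_0$ meeting $\Lambda_h$ only at $\lambda$ and $D_\epsilon$ meeting it transversally at $\deg_\lambda \Lambda_h$ nodal points; smoothness of the relative Hilbert schemes $\mathcal{C}^{[i]}_{D_0}$ and $\mathcal{C}^{[i]}_{D_\epsilon}$ for $i \le g-h$ makes these \emph{total spaces} deformation equivalent, hence equal in Euler characteristic. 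The triangular change of variables of Proposition \ref{prop:bpsform} converts the equalities $\chi(D_0,\chi_i) = \chi(D_\epsilon,\chi_i)$, $i \le g-h$, into $\chi(D_0,n_j) = \chi(D_\epsilon,n_j)$ for $j \ge h$; finally the support property (Corollary \ref{cor:support}) and $n_h = 1$ for $(g-h)$-nodal curves (Corollary \ref{cor:nodalcalc}) — both of which your multiplicativity analysis essentially contains — evaluate the two sides as $n_h(\mathcal{C}_\lambda)$ and $\deg_\lambda \Lambda_h$. Without Theorem B, or some substitute mechanism making total spaces of small relative Hilbert schemes rigid under the degeneration, your outline cannot be completed.
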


There is an equivalent local version.  Let $c$ be the germ
of a plane curve singularity.  Fix a plane curve $C$ such that 
$c$ is the germ of $C$ at some point $p$.  We write
$c^{[n]}$ for the subvariety 
of $C^{[n]}$ whose closed points parameterize 
subschemes of $C$ which are 
set-theoretically supported at $p$.  This space 
depends only on the completion of $C$ at $p$.

\begin{thmap}
Let $c$ be the germ of
a plane curve singularity  which contributes $\delta$ to the arithmetic
genus and has $b$ analytic local branches. 
If $[c] \in \VV$ is the central point in the base of a versal deformation, then:
\[
\sum_{n=0}^\infty q^n \chi(c^{[n]}) = 
\sum_{h=0}^\delta q^{\delta-h} (1 - q)^{2h-b}
\deg_{[c]} \VV_h
\]
\end{thmap}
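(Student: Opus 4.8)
The plan is to prove the local statement (Theorem A'); Theorem A then follows by a standard multiplicativity of Hilbert-scheme Euler numbers. Writing $Z_c(q) = \sum_n q^n \chi(c^{[n]})$ for the local Hilbert series of a germ $c$, one stratifies $\mathcal{C}_\lambda^{[n]}$ by how the length is distributed among the singular points and the smooth locus $U$; additivity of $\chi$ together with $\sum_m q^m \chi(\mathrm{Sym}^m U) = (1-q)^{-\chi(U)}$ gives the factorization $\sum_n q^n \chi(\mathcal{C}_\lambda^{[n]}) = (1-q)^{2\tilde{g}-2}\prod_i \bigl((1-q)^{b_i}Z_{c_i}(q)\bigr)$, and matching this against \eqref{eq:multprodformula} reduces Theorem A to Theorem A' germ by germ. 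As the basic building block I first record that the punctual Hilbert scheme of a node has $\chi(\mathrm{node}^{[n]}) = n$, so $Z_{\mathrm{node}}(q) = 1 + q/(1-q)^2 = (1-q+q^2)/(1-q)^2$; indeed this already verifies Theorem A' for the node, since there $\delta=1$, $b=2$, $\deg\VV_1 = 1$, and $\deg\VV_0 = \chi(\picbar^0(\text{nodal rational curve})) = 1$ by \eqref{eq:v0}, so the right-hand side is $q(1-q)^{-2}+1 = Z_{\mathrm{node}}(q)$.

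Next I set up the geometry on the versal base. Fix a representative $\VV$, the universal curve, and the proper relative Hilbert scheme over $\VV$, together with the Severi stratification $\VV_\delta \supset \cdots \supset \VV_0$, where the open stratum $\VV_h^+ \subset \VV_h$ parameterizes fibres with exactly $\delta-h$ nodes near the marked point. By the local multiplicativity above, the fibre over any point of $\VV_h^+$ has a fixed local Hilbert series $F_h(q) = (1-q)^{-e_h}\,Z_{\mathrm{node}}(q)^{\,\delta-h}$, recording $\delta-h$ independent nodes and a partial smoothing that raises the geometric genus by $h$. The crucial point to keep in view is that the right-hand side of Theorem A' is \emph{not} the sum of these fibre series: its per-stratum factor is the different combination $q^{\delta-h}(1-q)^{2h-b}$, weighted by the multiplicity $\deg_{[c]}\VV_h$. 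The whole content of the theorem is therefore a conservation-of-Euler-characteristic statement: the central series $Z_c$ is recovered from the nearby fibre series $F_h$ precisely through the strata multiplicities, by a vanishing-cycle mechanism rather than by restriction to any single stratum.

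The heart of the argument, and the step I expect to be the main obstacle, is the local computation that converts $Z_{\mathrm{node}}(q)^{\delta-h}$ into the stated factor $q^{\delta-h}(1-q)^{2h-b}$. The natural mechanism is to intersect $\VV_h$ with a generic disk of dimension $\operatorname{codim}\VV_h = \delta - h$; over each of the $\deg_{[c]}\VV_h$ transverse points near $[c]$ the total space is a simultaneous smoothing of $\delta-h$ nodes, and one must extract the contribution of each node-smoothing to $\chi(c^{[n]})$ via the corresponding vanishing cycles on the relative Hilbert scheme. I expect each node to contribute a factor $q$ — morally the leading-order difference between the node series and the series of its smoothing — so that a transverse $(\delta-h)$-nodal configuration contributes $q^{\delta-h}$, while the residual genus-$h$, $b$-branch geometry accounts for $(1-q)^{2h-b}$. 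Assembling these across all $h$ is the delicate part: because the strata are nested, one must pass from the closed strata $\VV_h$ to the open strata $\VV_h^+$ by inclusion--exclusion and check that the vanishing-cycle contributions multiply correctly over independent nodes, with no over- or under-counting along the deeper strata. Establishing this local factorization and its compatibility with the stratification is where the real work lies; the generating-function bookkeeping is then routine, and the endpoints \eqref{eq:vg}, \eqref{eq:vgminus1}, \eqref{eq:v0} and \eqref{eq:vgg} serve as consistency checks at $h=\delta$, $h=\delta-1$, $h=0$, and the purely nodal case.
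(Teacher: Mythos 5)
Your reduction of Theorem A to Theorem A$'$ (and the node computation) matches the paper's Corollary \ref{cor:equiv} and Proposition \ref{prop:bpsmultiplicativity}, and your guiding principle --- that the central series must be recovered from the nearby nodal fibres by a conservation-of-Euler-characteristic statement over the versal base --- is the right one. But the step you yourself flag as ``where the real work lies'' is genuinely missing, and the mechanism you propose for it (per-node vanishing-cycle contributions, a factor of $q$ for each surviving node, inclusion--exclusion over nested strata) is not how the argument closes up: the paper's proof never tracks individual nodes. It proceeds degree by degree. For each $h$ one chooses a polydisc $D=\DD^{\delta-h}\times\DD$ so that the slice $D_0$ meets $\VV_h$ only at $[c]$ while $D_\epsilon$ meets it transversally in $\deg_{[c]}\VV_h$ points of $\VV_h^+$, and one compares $\chi(\mathcal{C}^{[i]}_{D_0})$ with $\chi(\mathcal{C}^{[i]}_{D_\epsilon})$ for all $i\le\delta-h$ at once. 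The first input your proposal lacks is the justification of this equality: it holds because the total spaces $\mathcal{C}^{[i]}_{D_0}$ and $\mathcal{C}^{[i]}_{D_\epsilon}$ are \emph{smooth} for $i\le\delta-h$, hence deformation equivalent smooth varieties, hence diffeomorphic. This is Theorem B, whose proof occupies Section \ref{sec:smoothness} and rests on Lemma \ref{lem:transversetoideals}, a transversality statement (proved via the semigroup of the curve) that a generic $k$-dimensional subspace of the versal tangent space is complementary to \emph{every} ideal of colength $\le k$. Without some such control on the total space, ``conservation of $\chi$'' across a family of singular fibres is simply false, so this cannot be waved through.

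The second missing input is what lets one extract the single number $\deg_{[c]}\VV_h$ from the equalities of the $\chi_i$, $i\le\delta-h$. The paper uses the triangular change of variables of Proposition \ref{prop:bpsform} between the $\chi_i$ and the $n_j$, together with the vanishing $n_j(\mathcal{C}_p)=0$ for $j<\tilde g(\mathcal{C}_p)$ (Proposition \ref{prop:vanishing} and Corollary \ref{cor:support}, proved by Serre duality on the compactified Jacobian via the Abel--Jacobi map) and the value $n_h=1$ for a nodal curve of geometric genus $h$ (Corollary \ref{cor:nodalcalc}). Your proposal never introduces this transform or the support property, and without them the bookkeeping you describe has no mechanism to terminate: every stratum contributes to every $\chi_i$, and it is precisely the vanishing below the geometric genus that makes $n_h$ a constructible function supported on $\VV_h$, so that $\chi(D_\epsilon,n_h)$ counts exactly the transverse intersection points. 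In short, you have correctly identified the shape of the theorem and the consistency checks, but the two results that constitute the actual proof --- smoothness of the truncated relative Hilbert schemes over generic discs, and the vanishing of $n_j$ below the geometric genus --- are exactly the ones left unaddressed.
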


Theorem A restricts to Equations (\ref{eq:vg}) and (\ref{eq:vgminus1}); 
it implies Equations (\ref{eq:v0}) and (\ref{eq:vgg}) 
because $C^{[n]}$ is a $\PP^{n-g}$ bundle
over $\picbar^0(C)$ for large $n$ \cite{AK}.   The proof combines the methods of 
Fantechi, G\"ottsche, 
and van Straten \cite{FGvS}, 
techniques of Pandharipande and Thomas \cite{PT3},
and the following smoothness result.  
For a morphism $X \to Y$, we denote the relative Hilbert scheme by
$X^{[k]}_Y = \{ \,(y \in Y, [Z] \in X_y^{[k]})\,\}$.

\begin{thmb}
  Let $\mathcal{C} \to \Lambda$ be a family of complete, reduced, locally planar
  curves.  If the family is locally versal at $\lambda \in \Lambda$ and 
  $\lambda \in \DD^k \subset \Lambda$ is a generic, sufficently small
  $k$-dimensional polydisc, 
  then the total space of the relative Hilbert scheme 
  $\mathcal{C}_{\DD^k}^{[h]}$ is smooth if $h \le k$. 
\end{thmb}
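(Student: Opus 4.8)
The plan is to realise the relative Hilbert scheme, locally, as the zero locus of an explicit section of a vector bundle on a smooth space, to show that this section is transverse, and then to cut by a generic $\DD^k$ via a transversality argument whose numerics force $h\le k$. First I would reduce to a single singularity: local versality makes the classifying map $\pi\colon\Lambda\to\prod_i\VV(c_i)$ smooth, and $\mathcal{C}\to\Lambda$ is the pullback of the universal family over $\prod_i\VV(c_i)$; since the relative Hilbert scheme commutes with base change and smoothness descends under smooth base change, it suffices to treat one germ $c$ and its miniversal base $\VV=\VV(c)$. Embedding $c$ in a smooth surface $S$, write the miniversal family as $F=f+\sum_i t_i g_i$, with $\{g_i\}$ a monomial basis of the Tjurina algebra $\mathcal{O}/(f,f_x,f_y)$. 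A point of $\mathcal{C}^{[h]}_{\VV}$ near the singular point is a pair $(t,[Z])$ with $Z\subset S$ of length $h$ and $F(\cdot,t)\in I_Z$; thus $\mathcal{C}^{[h]}_{\VV}$ is cut out in the smooth space $\VV\times\Hilb^h(S)$ as the zero locus of the section $\sigma(t,[Z])=F(\cdot,t)\bmod I_Z$ of the tautological rank-$h$ bundle with fibre $\mathcal{O}_Z=\mathcal{O}_S/I_Z$ at $[Z]$.

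The engine of the proof is that $\sigma$ is transverse to the zero section. At a zero $(t_0,[Z])$ the differential is $(v,\phi)\mapsto\sum_i v_i\,g_i|_Z+\phi(f_{t_0})$, where $\phi$ ranges over $\mathrm{Hom}(I_Z,\mathcal{O}_Z)=T_{[Z]}\Hilb^h(S)$. The two coordinate translations define homomorphisms $g\mapsto\partial_x g\bmod I_Z$ and $g\mapsto\partial_y g\bmod I_Z$; these send $f_{t_0}$ to $f_x|_Z$ and $f_y|_Z$, and scaling by $\mathcal{O}_Z$ shows the whole submodule $(f_x,f_y)\mathcal{O}_Z$ lies in the image of the second term. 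The first term contributes the span of $\{g_i|_Z\}$, and because the $g_i$ span the Tjurina algebra, this span together with $(f_x,f_y)\mathcal{O}_Z$ exhausts $\mathcal{O}_Z$. Hence $d\sigma$ is surjective, so $\mathcal{C}^{[h]}_{\VV}$, and therefore $\mathcal{C}^{[h]}_\Lambda$, is smooth of pure dimension $\dim\Lambda+h$. Moreover the tangent space to a fibre of $\pi$ at $[Z]$ is $\ker(d\sigma_{t_0})\subset T_{[Z]}\Hilb^h(S)$, of dimension at most $2h$; since $\ker(d\pi)$ equals this fibre tangent space, one gets the uniform bound $\mathrm{corank}(d\pi)\le h$.

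It remains to pass to a generic $k$-disc through $\lambda$. Writing $\DD^k=\ker q$ for a generic linear $q\colon\Lambda\to\CC^{\dim\Lambda-k}$, one has $\mathcal{C}^{[h]}_{\DD^k}=(q\circ\pi)^{-1}(0)$, which is smooth at $x$ as soon as $q$ restricts surjectively to $\mathrm{im}(d\pi_x)$. Because $\mathrm{corank}(d\pi_x)\le h\le k$, the subspace $\mathrm{im}(d\pi_x)$ always has dimension $\ge\dim\Lambda-k$, so this surjectivity holds for generic $q$ \emph{pointwise}; the work is to make one generic $q$ succeed simultaneously. Over $\Lambda\smallsetminus\{\lambda\}$ this follows from a Kleiman-type generic transversality argument, using that the linear group acts transitively on the punctured base. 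The remaining, and decisive, point is the central fibre $\pi^{-1}(\lambda)=\mathcal{C}_\lambda^{[h]}$, where no transitive action is available: here I would instead bound the incidence of ``bad'' $q$ by stratifying the fibre according to $\mathrm{corank}(d\pi)$.

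The main obstacle is precisely the codimension estimate needed to run that count: that the jumping locus $\{[Z]\in\mathcal{C}_\lambda^{[h]}:\dim T_{[Z]}\mathcal{C}_\lambda^{[h]}\ge h+c\}$ has codimension $\ge c$ in $\mathcal{C}_\lambda^{[h]}$ for every $c$. Equivalently, since one checks that the corank at $[Z]$ equals $\dim_\CC\bigl(\mathcal{O}_Z/(f_x,f_y)\mathcal{O}_Z\bigr)$, this controls how much length of $Z$ may pile up against the Jacobian ideal at the singular point; it is a statement about the local geometry of Hilbert schemes of points on a planar curve singularity, and establishing it is the crux. Granting it, the stratum of corank $c$ has dimension $\le h-c$, the bad locus of $q$ for a point of that stratum has codimension $k-c+1$ in the space of maps $q$, and the incidence fails to dominate exactly when $h-c<k-c+1$, that is, when $h\le k$ — which is how the hypothesis enters.
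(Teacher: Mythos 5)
Your setup is sound and, up to the reduction to a single germ and the transversality of the section $\sigma$ over the \emph{full} versal base, it matches the paper (Sequence (\ref{eq:defseq}) and Proposition \ref{prop:smoothoverversal}); your observation that $\partial_x,\partial_y\in\mathrm{Hom}(I_Z,\oO_Z)$ already force $(f_x,f_y)\oO_Z$ into the image of $\phi\mapsto\phi(f)$ is a pleasant alternative to the paper's device of choosing $V$ to contain all polynomials of degree $\le k$. But the substance of Theorem B is the passage to a generic $k$-disc, and there your argument has a genuine gap which you yourself flag: the codimension estimate that $\{[Z]\in\mathcal{C}_\lambda^{[h]}\,:\,\mathrm{corank}(d\pi)\ge c\}$ has codimension $\ge c$ in $\mathcal{C}_\lambda^{[h]}$. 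Nothing in your write-up proves this, and it is not a routine fact: by the very incidence count you run afterwards, such an estimate is essentially \emph{equivalent} to the statement being proved, so you have reduced the theorem to something at least as hard as itself. (Two smaller problems: the corank at $[Z]$ is bounded above by, but not in general equal to, $\dim_\CC\bigl(\oO_Z/(f_x,f_y)\oO_Z\bigr)$, since the image of $\phi\mapsto\phi(f)$ may strictly contain $(f_x,f_y)\oO_Z$; and the Kleiman-type argument over $\Lambda\smallsetminus\{\lambda\}$ does not apply as stated, since no group acts transitively there compatibly with the family --- though that step is also unnecessary, because smoothness is open and one need only verify it along the central fibre before shrinking the disc, exactly as in Corollary \ref{cor:smooth}.)

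The paper avoids the corank stratification entirely. Its Lemma \ref{lem:transversetoideals} shows that a \emph{generic} $k$-dimensional subspace $W$ of a finite-length quotient $\bar{\oO}$ of the local ring of a reduced curve satisfies $W+\bar{I}=\bar{\oO}$ for \emph{every} ideal of colength $\le k$ simultaneously. The mechanism is the semigroup of the singularity: fixing a normalization $\oO\subset\CC[[t]]^{\oplus r}$ and a basis $\{f_s\}_{s\in\Sigma}$ of $\bar{\oO}$ indexed by orders of vanishing, every $\bar{I}$ projects isomorphically onto the span of the basis vectors indexed by $\ord(\tilde{I})$, so transversality to $\bar{I}$ reduces to surjectivity of the complementary coordinate projection restricted to $W$ --- a nonempty Zariski-open condition depending only on a subset of $\Sigma$ of size $\le k$, of which there are finitely many. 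Applied to $W=T_\lambda\DD^k$, this makes $T_f\DD^k\to\CC[x,y]/I$ surjective for all colength-$h$ ideals with $h\le k$ at once, so the final map of the tangent sequence is onto using the base directions alone, with no need to control how the tangent spaces of $\mathcal{C}_\lambda^{[h]}$ jump. To complete your route you must either prove your codimension estimate or replace that step by an argument of this kind.
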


We recall facts about generating series of Euler numbers of Hilbert schemes in 
Section \ref{sec:background}, prove Theorem A in 
Section \ref{sec:multiplicity} --
assuming Theorem B, which we prove in Section \ref{sec:smoothness}.  
We present formulas for the multiplicities in the case of ADE singularities 
in Section
\ref{sec:examples}.  
The final two sections discuss previous appearances 
of the series in the left hand side of Theorem A.  
In Section \ref{sec:bps}, we explain its relation to 
the contribution of $\mathcal{C}_\lambda$ 
to Gopakumar-Vafa invariants in Pandharipande-Thomas theory \cite{PT3}.  
Section \ref{sec:knots}
suggests that Theorem A may relate
a conjecture of Oblomkov and 
Shende \cite{OS} -- comparing
Euler numbers of Hilbert schemes of points on singular curves
to the HOMFLY polynomials of the 
links of the singularities -- to 
work of Jaeger on state-sum formulae for the HOMFLY polynomial \cite{Ja}.  
The reader is warned that the final section, in the words
of the anonymous reviewer, 
``is rather speculative, unfinished, and at best has the status
of a possible approach that might be tried.'' 

\begin{acknowledgements}
This work began as
attempt to answer a question of Rahul Pandharipande: ``how can you {\em see}
the curves'' counted by the local Gopakumar-Vafa invariants? 
I thank in addition Lotte Hollands, 
Sheldon Katz, Alexei Oblomkov, Giulia Sacc\`a, and 
Richard Thomas for helpful discussions, the anonymous reviewer for many 
corrections and 
suggestions, and the Instituto Superior T\'ecnico
of Lisbon for its hospitality during the completion of this work.
\end{acknowledgements}

\section{Background} \label{sec:background}

We need
the following properties of Euler numbers of complex varieties.
\begin{itemize}
\item $\chi(X \setminus Y) = \chi(X) -\chi(Y)$ for 
 $X \subset Y$ a closed immersion. 
\item $\chi(A \times B) = \chi(A) \chi(B)$.
\item $\chi(\AA^1) = 1$. 
\end{itemize}
The first property makes it natural to weight Euler numbers by 
constructible functions.  For $f$ a constructible 
function on a space $Z$, we write
$\chi(Z,f) := \sum_i  \chi(f^{-1}(i))\cdot i$. 

The remainder of the section collects for convenience facts 
about generating functions of Euler numbers of Hilbert schemes 
of locally planar 
curves.  All results are extracted from 
the work of Pandharipande and Thomas \cite[Appendix B]{PT3}.  
Unless otherwise specified, $C$ is an integral, Gorenstein curve 
of arithmetic genus $g$ and geometric genus $\tilde{g}$. 

The following statement is elementary. 

\begin{proposition} \label{prop:bpsform}
  Let $\mathbf{f} = (f_0, f_1, \ldots)$ be an arbitrary 
  sequence of integers.  
  Then there is a unique sequence
  of integers $\mathbf{n} = 
  (n_g, n_{g-1}, \ldots)$ giving an equality of formal power series 
  \[\sum_{d=0}^\infty f_d q^{d} = \sum_{h= -\infty}^g n_h q^{g-h} (1-q)^{2h-2} \]
  The matrix $T = T(g)$ such that $\mathbf{n} =
  T \mathbf{f}$ is lower triangular with ones on the diagonal; in particular
  \begin{eqnarray*}
    n_g & = & f_0 \\
    n_{g-1} & = & f_1 + (2g-2)f_0 
  \end{eqnarray*}
  The $n_h$ vanish for $h < 0$ if and only if $f_d - f_{2g-2-d} = c \cdot (d+1-g)$ 
  for
  some 
  $c$, in which case $c = n_0$. 
\end{proposition}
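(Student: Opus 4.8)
The plan is to read the asserted identity as an invertible change of basis between the two index sets and then to analyse a reflection symmetry of the relevant basis functions. First I would reindex the right-hand side by $j = g - h$, rewriting it as $\sum_{j \ge 0} n_{g-j}\, q^{j}(1-q)^{2g-2j-2}$, and expand each factor $(1-q)^{2g-2j-2}$ by the generalized binomial theorem. Extracting the coefficient of $q^d$ gives
\[
 f_d = \sum_{j=0}^{d} (-1)^{d-j}\binom{2g-2j-2}{d-j}\, n_{g-j},
\]
so the matrix $M$ defined by $f_d = \sum_j M_{dj}\, n_{g-j}$ has entries $M_{dj} = (-1)^{d-j}\binom{2g-2j-2}{d-j}$. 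This is lower triangular, since the binomial coefficient vanishes for $j>d$, with $M_{dd}=1$ and integer entries; hence $M$ is invertible over $\ZZ$ and its inverse $T = M^{-1}$ is again lower triangular with ones on the diagonal and integer entries. This settles existence, uniqueness over $\ZZ$, and triangularity in one stroke, and reading off the first two rows of $M$ (with $M_{10} = -(2g-2)$) yields $n_g = f_0$ and $n_{g-1} = f_1 + (2g-2)f_0$.

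For the vanishing criterion I would exploit the symmetry of the basis functions $b_h(q) := q^{g-h}(1-q)^{2h-2}$. Because the exponent $2h-2$ is even, a direct substitution shows $b_h(q) = q^{2g-2}\,b_h(1/q)$ for \emph{every} $h$. In particular, for $1 \le h \le g$ the function $b_h$ is a polynomial of degree $g+h-2 \le 2g-2$ whose coefficient sequence is palindromic, i.e.\ symmetric under $d \mapsto 2g-2-d$, and whose lowest-degree term is $q^{g-h}$. As $h$ runs over $1,\dots,g$ these lowest degrees $g-1,\dots,0$ are distinct, so the $b_h$ are linearly independent; being $g$ elements of the $g$-dimensional space of palindromic polynomials of degree $\le 2g-2$, they form a basis of it. The remaining relevant vector is $b_0 = q^{g}(1-q)^{-2} = \sum_{d \ge g}(d-g+1)q^{d}$, the unique piece with nonpolynomial (order-two pole) behaviour once one restricts to $h \ge 0$.

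The equivalence then splits into two coefficient computations, carried out entirely with formal series and the convention $f_d = 0$ for $d<0$. For the forward direction, assuming $n_h=0$ for $h<0$ writes $F(q) := \sum_d f_d q^d$ as $n_0 b_0 + P$ with $P$ a palindromic polynomial; combining the palindrome relation for $P$ with the coefficients $(d-g+1)_{+}$ of $b_0$ and the elementary identity $\max(m,0)-\max(-m,0)=m$ for $m=d-g+1$ yields $f_d - f_{2g-2-d} = n_0(d+1-g)$, so $c=n_0$. Conversely, given the relation with some constant $c$, I would set $G := F - c\,b_0$ and verify coefficientwise that its coefficients vanish for $d>2g-2$ and satisfy $g_d = g_{2g-2-d}$; hence $G$ is a palindromic polynomial of degree $\le 2g-2$, lies in the span of $b_1,\dots,b_g$, and uniqueness of the expansion forces $n_0=c$ together with $n_h=0$ for all $h<0$. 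The main obstacle is not conceptual but bookkeeping: one must treat the two ranges $d \le 2g-2$ and $d>2g-2$ (where the reflected index becomes negative) uniformly, which the $\max$-identity achieves, and resist phrasing the argument through pole orders at $q=1$, since $F$ is merely a formal series and need not converge there — keeping everything at the level of coefficients is what makes both implications rigorous.
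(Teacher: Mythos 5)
Your proof is correct, and the paper itself offers no argument here — it simply declares the proposition elementary — so your write-up supplies exactly the standard details that are being omitted: unipotent lower-triangularity of the change of basis $f_d=\sum_{j\le d}(-1)^{d-j}\binom{2g-2j-2}{d-j}n_{g-j}$, and the palindromy $b_h(q)=q^{2g-2}b_h(1/q)$ of the basis functions together with the coefficient formula $\sum_{d\ge g}(d-g+1)q^d$ for $b_0$. The dimension count for the space of palindromic polynomials of degree $\le 2g-2$ and the $\max(m,0)-\max(-m,0)=m$ bookkeeping are both right, so nothing further is needed.
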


\begin{definition} \label{def:bps}
  Let $n_h(C) \in \ZZ$ be defined by   
  \[
  \sum_{n=0}^\infty q^{n} \chi( C^{[n]}) \,\, = \,\,
  \sum_{h = -\infty}^{g} q^{g-h} (1-q)^{2h-2}  \,\, n_h(C) \]  
\end{definition}

\begin{lemma} \label{lem:macd}
  For $C$ a smooth curve,
  $\sum q^n \chi(C^{[n]}) = (1-q)^{-\chi(C)}$.  
  If in addition $C$ is proper of genus $g$, then
  $n_g(C) = 1$ and
  $n_{h}(C) = 0$ for $h \ne g$.
\end{lemma}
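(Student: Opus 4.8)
The plan is to identify the Hilbert schemes of a smooth curve with its symmetric products, evaluate the resulting generating series by Macdonald's formula, and then read off the $n_h(C)$ using the uniqueness built into Proposition~\ref{prop:bpsform}.

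First I would note that on a smooth curve every length-$n$ subscheme is an effective Cartier divisor, so $C^{[n]} \cong \mathrm{Sym}^n C$ and $\sum_n \chi(C^{[n]}) q^n = \sum_n \chi(\mathrm{Sym}^n C)q^n$. Writing $Z(X) := \sum_n \chi(\mathrm{Sym}^n X)q^n \in 1 + q\ZZ[[q]]$, the decomposition of a $0$-cycle into its parts supported on an open $U \subset X$ and on the closed complement $Y = X \setminus U$ exhibits $\mathrm{Sym}^n X$ as a disjoint union of the locally closed subsets $\mathrm{Sym}^a U \times \mathrm{Sym}^b Y$ with $a+b=n$; by the additivity and multiplicativity of $\chi$ recorded in Section~\ref{sec:background} this gives $Z(X) = Z(U)\,Z(Y)$. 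Since $\mathrm{Sym}^n \AA^1 \cong \AA^n$ has $\chi = 1$, we obtain $Z(\AA^1) = (1-q)^{-1}$, and likewise $Z(\mathrm{pt}) = (1-q)^{-1}$.

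Next I would reduce the first assertion to the proper case. If $\bar C \supset C$ is a smooth compactification with $\bar C \setminus C = \{p_1,\dots,p_r\}$, the multiplicativity above gives $Z(\bar C) = Z(C)\,(1-q)^{-r}$, while $\chi(\bar C) = \chi(C) + r$, so $Z(C) = (1-q)^{-\chi(C)}$ follows once the identity is known for $\bar C$. For a smooth proper curve of genus $\tilde g$, the equality $Z(\bar C) = (1-q)^{2\tilde g - 2}$ is the Euler-characteristic specialization at $t=-1$ of Macdonald's formula for the Poincaré polynomials of symmetric products, $\sum_n P_t(\mathrm{Sym}^n \bar C)\,q^n = (1+tq)^{2\tilde g}(1-q)^{-1}(1-t^2 q)^{-1}$; alternatively one can use that $\mathrm{Sym}^n \bar C$ is a projective bundle over $\mathrm{Pic}^n(\bar C)$ for $n$ large together with additivity to pin down the series. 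This proper computation is the one genuinely geometric input and is the main (indeed essentially the only) obstacle; the scissor relation for symmetric products does not by itself determine $Z$ beyond the Euler-characteristic invariant, so some cohomological information must enter here.

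Finally, for $C$ proper of genus $g$ we have $\chi(C) = 2 - 2g$, so the first part yields $\sum_n \chi(C^{[n]})q^n = (1-q)^{2g-2} = q^{g-g}(1-q)^{2g-2}$. Comparing with Definition~\ref{def:bps} and invoking the uniqueness of the expansion in the basis $\{q^{g-h}(1-q)^{2h-2}\}$ --- valid because the transition matrix $T$ of Proposition~\ref{prop:bpsform} is lower triangular with unit diagonal, hence invertible --- I read off $n_g(C) = 1$ and $n_h(C) = 0$ for every $h \neq g$.
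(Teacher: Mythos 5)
Your proposal is correct and follows essentially the same route as the paper, which simply identifies $C^{[n]}$ with $\mathrm{Sym}^n C$ for smooth $C$ and cites Macdonald's computation; you have merely filled in the standard details (the scissor relation for symmetric products, the reduction to the proper case, and the triangularity of $T$ from Proposition~\ref{prop:bpsform} to read off the $n_h$).
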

\begin{proof}
  For a smooth curve, the Hilbert schemes and symmetric products agree.
  Thus the claim follows from Macdonald's calculation of
  the cohomology of symmetric products of curves \cite{Ma}. 
\end{proof}

\begin{remark}  The assertion of 
Theorem A can be restated as
$n_h(\mathcal{C}_\lambda) = \deg_{\lambda} \Lambda_h$. 
\end{remark}

\begin{lemma} \label{lem:hart}  (Hartshorne \cite{H}.)
  Let $F$ 
  be a torsion free sheaf on $C$. 
  Write $F^*$ for $\hH om (F,\oO_C)$. 
  Then $\mathcal{E}xt^{\ge 1}(F,\oO_C) = 0$ and 
  $F = (F^*)^*$.  Serre duality
  holds in the form $\mathrm{H}^i(F) = \mathrm{H}^{1-i}(F^* \otimes \omega_C)^*$.
  For $F$ rank one and torsion free, define its 
  degree $d(F):=\chi(F) - \chi(\oO_C)$.   This satisfies 
  $d(F) = - d(F^*)$, and, for $L$ any line bundle, 
  $d(F \otimes L) = d(F) + d(L)$. 
\end{lemma}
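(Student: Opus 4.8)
The plan is to separate the two local homological statements (the vanishing $\mathcal{E}xt^{\ge 1}(F,\oO_C) = 0$ and the reflexivity $F = (F^*)^*$), which may be checked on stalks, from the two global statements (Serre duality and the degree formulas), which follow formally once the local vanishing is in hand. First I would record that the Gorenstein property is local, so that each stalk $\oO_{C,x}$ is a one-dimensional Gorenstein local ring. Since $C$ is integral, a coherent sheaf is torsion free exactly when none of its stalks has a nonzero submodule supported at a closed point; such a stalk therefore has depth $\ge 1$, and being of dimension $1$ it is maximal Cohen--Macaulay.

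The local step is the heart of the matter, and I expect it to be the main obstacle, in the sense that it is the only place where the Gorenstein hypothesis is genuinely used. Over a Gorenstein local ring $R$ the functor $M \mapsto M^* = \mathrm{Hom}_R(M,R)$ restricts to a duality on maximal Cohen--Macaulay modules: one has $\mathrm{Ext}^{>0}_R(M,R) = 0$ and the biduality map $M \to M^{**}$ is an isomorphism. In the one-dimensional case I would sketch this directly. Embedding the torsion free stalk into a free module gives a short exact sequence $0 \to M \to R^{r} \to N \to 0$ with $N$ of finite length, and applying $\mathrm{Hom}_R(-,R)$ reduces everything to the homology of $N$: since $\mathrm{Hom}_R(N,R) = 0$ and, for a finite length module over a one-dimensional Gorenstein ring, $\mathrm{Ext}^i_R(N,R)$ vanishes for $i \ne 1$ by local duality, the long exact sequence yields $\mathrm{Ext}^{\ge 1}_R(M,R) = 0$ and, after a second dualization, $M \cong M^{**}$. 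Sheafifying produces $\mathcal{E}xt^{\ge 1}(F,\oO_C) = 0$ and $F = (F^*)^*$.

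For Serre duality I would invoke Grothendieck duality on the projective Cohen--Macaulay curve $C$, with dualizing line bundle $\omega_C$, in the form $\mathrm{H}^i(F) \cong \Ext^{1-i}(F,\omega_C)^*$. The local-to-global spectral sequence $\mathrm{H}^p(\mathcal{E}xt^q(F,\omega_C)) \Rightarrow \Ext^{p+q}(F,\omega_C)$ then degenerates: because $\omega_C$ is invertible, $\mathcal{E}xt^q(F,\omega_C) \cong \mathcal{E}xt^q(F,\oO_C) \otimes \omega_C$, which vanishes for $q > 0$ by the previous step and equals $F^* \otimes \omega_C$ for $q = 0$. Hence $\Ext^{1-i}(F,\omega_C) \cong \mathrm{H}^{1-i}(F^* \otimes \omega_C)$, which is precisely the asserted form $\mathrm{H}^i(F) = \mathrm{H}^{1-i}(F^* \otimes \omega_C)^*$.

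Finally, for the degree formulas I would first establish additivity $d(F \otimes L) = d(F) + d(L)$ by Euler-characteristic bookkeeping. For an effective Cartier divisor $D$ avoiding the finitely many points where the rank one sheaf $F$ fails to be locally free, the sequence $0 \to F \to F(D) \to F(D)|_D \to 0$ gives $\chi(F(D)) - \chi(F) = \deg D = d(\oO_C(D))$; writing a general $L$ as a difference of very ample bundles of this type yields the claim. The relation $d(F) = -d(F^*)$ then drops out of the displayed Serre duality: it gives $\chi(F) = -\chi(F^* \otimes \omega_C)$, and combining this with additivity and the value $d(\omega_C) = 2g-2$ (itself Serre duality applied to $\oO_C$) produces $\chi(F) + \chi(F^*) = 2\chi(\oO_C)$, which is exactly $d(F) + d(F^*) = 0$.
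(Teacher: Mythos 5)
Your argument is correct, and all the key inputs are where they need to be: the reduction of the local statements to a finite-length quotient $0 \to M \to R^{r} \to N \to 0$ over the one-dimensional Gorenstein stalk, the use of $\mathrm{Ext}^{i}_R(N,R)=0$ for $i\neq 1$ to kill $\mathrm{Ext}^{\ge 1}_R(M,R)$ and to run the biduality comparison, the degeneration of the local-to-global spectral sequence (using invertibility of $\omega_C$) to convert Grothendieck duality into the stated form, and the Euler-characteristic bookkeeping for the degree identities, including $d(\omega_C)=2g-2$ to get $\chi(F)+\chi(F^*)=2\chi(\oO_C)$. There is nothing in the paper to compare this against: the lemma is stated with only the attribution to Hartshorne \cite{H} and no proof is given, the author treating it as a known input. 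Your write-up is essentially a self-contained reconstruction of the standard argument (and of what Hartshorne's cited paper does), trading the citation for an explicit proof; the only places where you are sketchier than a referee might like are the surjectivity of $M \to M^{**}$, which needs the natural identification $\mathrm{Ext}^1_R(\mathrm{Ext}^1_R(N,R),R)\cong N$ to be compatible with the two presentations of $R^{r}$-quotients, and the left-exactness of $0 \to F \to F(D)$, which uses that $F$ is locally free near $D$ so the relevant $\mathcal{T}or_1$ vanishes — both routine, but worth a sentence each.
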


\begin{proposition}  \label{prop:vanishing}
  We have 
  $n_h(C) = 0$ for $h < 0$.
  Moreover, $n_0(C)$ is the Euler number of the compactified
  Jacobian of $C$. At the other extreme
  $n_{g-1}(C) = \chi(C) + 2g - 2$ and
  $n_g(C) = 1$. 
\end{proposition}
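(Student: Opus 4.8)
The plan is to read off $n_g(C)$ and $n_{g-1}(C)$ directly from Proposition~\ref{prop:bpsform}, and to obtain both the vanishing $n_h(C)=0$ for $h<0$ and the identification of $n_0(C)$ from the functional-equation criterion stated there. Since $C^{[0]}$ is a reduced point and $C^{[1]}=C$, we have $f_0=\chi(C^{[0]})=1$ and $f_1=\chi(C^{[1]})=\chi(C)$, so the explicit formulas of Proposition~\ref{prop:bpsform} give $n_g(C)=f_0=1$ and $n_{g-1}(C)=f_1+(2g-2)f_0=\chi(C)+2g-2$. It then remains to verify that $f_d=\chi(C^{[d]})$ satisfies $f_d-f_{2g-2-d}=c\,(d+1-g)$ for some constant $c$, and to show that this $c=n_0(C)$ equals $\chi(\picbar^0(C))$.

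To prove this functional equation I would first express each $\chi(C^{[n]})$ as a weighted Euler number over the compactified Jacobian. Sending a length-$n$ subscheme $Z$ to its ideal sheaf $I_Z$ defines an Abel--Jacobi morphism $C^{[n]}\to\picbar^{-n}(C)$, since $\chi(I_Z)=\chi(\oO_C)-n$ gives $d(I_Z)=-n$. A subscheme with $I_Z\cong F$ is the same datum as an injection $F\hookrightarrow\oO_C$ up to isomorphism of $F$; every nonzero map of rank-one torsion-free sheaves on an integral curve is injective, so by Lemma~\ref{lem:hart} the fibre over $[F]$ is built from $\mathrm{Hom}(F,\oO_C)=H^0(F^*)$ and has Euler number $h^0(F^*)$. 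The main obstacle is exactly this fibre computation: one must check, as in \cite[Appendix B]{PT3}, that the automorphisms of $F$ do not disturb the identification of the fibre's Euler number with $h^0(F^*)$. Granting this and using that $F\mapsto F^*$ is an isomorphism $\picbar^{-n}(C)\cong\picbar^{n}(C)$ (Lemma~\ref{lem:hart}), I obtain $\chi(C^{[n]})=\chi\big(\picbar^{n}(C),\,[G]\mapsto h^0(G)\big)$.

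It then remains to compare the degrees $n$ and $2g-2-n$ via duality. The assignment $G\mapsto H:=G^*\otimes\omega_C$ is an isomorphism $\picbar^{n}(C)\cong\picbar^{2g-2-n}(C)$, and Serre duality in the form of Lemma~\ref{lem:hart} gives $h^0(G)=h^1(H)$. Since $\chi(H)=d(H)+\chi(\oO_C)=g-1-n$ is constant on $\picbar^{2g-2-n}(C)$, the relation $h^1=h^0-\chi$ yields $\chi(C^{[n]})=\chi\big(\picbar^{2g-2-n}(C),\,h^0\big)-(g-1-n)\,\chi\big(\picbar^{2g-2-n}(C)\big)$. The first term is $\chi(C^{[2g-2-n]})$ by the previous paragraph, and $\chi(\picbar^{e}(C))$ is independent of $e$ because twisting by a line bundle of degree $e'-e$ identifies the compactified Jacobians. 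Hence $f_n-f_{2g-2-n}=(n+1-g)\,\chi(\picbar^0(C))$, which is the required functional equation with $c=\chi(\picbar^0(C))$. Proposition~\ref{prop:bpsform} now gives $n_h(C)=0$ for $h<0$ and $n_0(C)=\chi(\picbar^0(C))$, completing the proof.
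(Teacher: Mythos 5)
Your proof is correct and follows essentially the same route as the paper: an Abel--Jacobi map with projective-space fibres, the Serre-duality involution together with Riemann--Roch to obtain the functional equation $f_n-f_{2g-2-n}=(n+1-g)\chi(\picbar^0(C))$, degree-independence of $\chi(\picbar^n(C))$ via twisting by a line bundle, and then Proposition \ref{prop:bpsform}. The only cosmetic difference is that you send $Z$ to $I_Z$ rather than to $I_Z^*$; the paper's convention sidesteps the $\mathrm{Aut}(F)$ issue you flag, since a subscheme with $I_Z^*\cong F$ is exactly a section $\oO_C\to F$ up to scalar, giving the fibre $\PP(\mathrm{H}^0(F))$ directly (and your version is rescued the same way, by $I_Z=I_Z^{**}$ from Lemma \ref{lem:hart}).
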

\begin{proof}
  Let $\picbar^n(C)$ be the 
  moduli of rank one torsion free sheaves of degree $n$.  
  There  is a map $AJ_n: C^{[n]} \to \picbar^n(C)$ taking a subscheme $Z \subset C$
  to $I_Z^*$, the dual
  of the ideal sheaf cutting it out \cite{AK}.  The inclusion 
  $I_Z \to \oO_C$ dualizes to a section $\oO_C \to I_Z^*$, thus the 
  fibre $AJ_n^{-1}(F) = \PP(\mathrm{H}^0(F))$.  Viewing
  $\mathrm{h}^0: [F] \mapsto \mathrm{h}^0(F)$ 
  as a constructible function on $\picbar^n(C)$, we have
  $\chi(C^{[n]}) = \chi(\picbar^n(C), \mathrm{h}^0)$.
  The involution $F \mapsto \omega_C \otimes F^*$ induces
  an isomorphism $\iota: \picbar^n(C) \cong \picbar^{2g - 2 -n}$. By Serre duality,
  $\iota \circ \mathrm{h}^0 = \mathrm{h}^1$, and by Riemann-Roch,
  \[\chi(\picbar^n(C), \mathrm{h}^0)
  - \chi(\picbar^{2g-2-n}(C), \mathrm{h}^0) = 
  \chi(\picbar^n(C), \mathrm{h}^0 - \mathrm{h}^0 \circ \iota) =
  (n + 1 - g) \chi(\picbar^n(C))
  \]
  The choice of a degree $1$ line bundle induces isomorphisms 
  $\picbar^n(C) \cong \picbar^{n+1}(C)$, hence these
  spaces have the same  Euler number.  The result now follows
  from Proposition \ref{prop:bpsform}.
\end{proof}

\begin{corollary}\label{cor:nodalcubic}
  Let $\PP^1$, $\PP^1_{\mathrm{node}}$, $\PP^1_{\mathrm{cusp}}$ be rational curves
  that are smooth, have one node, and have one cusp respectively.  
  \begin{itemize}
    \item $n_0(\PP^1) = 1$ and all other $n_h$ vanish. 
    \item $n_0(\PP^1_{\mathrm{node}}) = 1$ and $n_1(\PP^1_{\mathrm{node}}) = 1$ 
      and all other $n_h$ vanish.
    \item $n_0(\PP^1_{\mathrm{cusp}}) = 2$ and $n_1(\PP^1_{\mathrm{cusp}}) = 1$ 
      and all other $n_h$ vanish. 
  \end{itemize}
\end{corollary}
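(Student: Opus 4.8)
The plan is to derive all three computations directly from Proposition \ref{prop:vanishing} and Lemma \ref{lem:macd}, exploiting the fact that each of these curves has arithmetic genus at most one, so that only a handful of the $n_h$ can be nonzero. Recall that by Definition \ref{def:bps} the index $h$ ranges only over $h \le g$, while Proposition \ref{prop:vanishing} gives $n_h = 0$ for $h < 0$. Hence for an arithmetic genus $g = 1$ curve the only indices that can survive are $h = 0$ and $h = 1$, which are precisely $g - 1$ and $g$; and for a genus $g = 0$ curve only $h = 0 = g$ survives. The whole corollary therefore reduces to evaluating the two extreme coefficients supplied by Proposition \ref{prop:vanishing} and computing a couple of topological Euler numbers.

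For $\PP^1$ I would simply invoke Lemma \ref{lem:macd}: it is smooth and proper of genus $0$, so $n_0 = n_g = 1$ and all other $n_h$ vanish, which is the first bullet.

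For the two singular cubics, both of arithmetic genus $g = 1$, the remarks above leave only the indices $h = 0$ and $h = 1$ to pin down. Proposition \ref{prop:vanishing} supplies both at once: $n_g = n_1 = 1$, while $n_{g-1} = n_0 = \chi(C) + 2g - 2 = \chi(C)$ since $g = 1$. It then remains to compute each Euler number by additivity along the normalization map $\PP^1 \to C$. For the nodal curve two points of $\PP^1$ are glued, so $\chi(\PP^1_{\mathrm{node}}) = 2 - 2 + 1 = 1$, giving $n_0 = 1$. For the cuspidal curve the normalization is a bijection, so $\chi(\PP^1_{\mathrm{cusp}}) = \chi(\PP^1) = 2$, giving $n_0 = 2$. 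This yields the remaining two bullets.

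There is no genuine obstacle in this argument; the only points demanding care are confirming that no index outside $\{0,1\}$ can contribute and performing the two Euler-number bookkeeping computations correctly. As a consistency check I would compare with the identification $n_0(C) = \chi(\picbar^0(C))$ from Proposition \ref{prop:vanishing}: the compactified Jacobian of the nodal cubic is again a nodal cubic, of Euler number $1$, and that of the cuspidal cubic is a cuspidal cubic, of Euler number $2$, in agreement with the values of $\chi(C)$ computed above.
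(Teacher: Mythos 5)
Your proposal is correct and follows essentially the same route as the paper, which deduces the corollary from the extreme coefficients $n_g = 1$, $n_{g-1} = \chi(C) + 2g - 2$ (the ``in particular'' of Proposition \ref{prop:bpsform}, restated in Proposition \ref{prop:vanishing}) together with the vanishing $n_h = 0$ for $h < 0$. Your Euler-number computations and the consistency check against $\chi(\picbar^0(C))$ are both accurate.
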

\begin{proof}
  Follows from the ``in particular'' of Proposition \ref{prop:bpsform} and
  the vanishing of Proposition \ref{prop:vanishing}. 
\end{proof}

More can be said by working locally at the singularities.  

\begin{definition} \label{def:localbps}
  Let $c$ be the germ of a Gorenstein curve singularity, 
  let $\delta$ be its delta invariant,
  and $b$ the number of analytic local branches.  Define
  $n_h(c)$ by the formula 
  \[
  \sum_{h=-\infty}^\delta q^{\delta -h} (1 - q)^{2h}\,
  n_h(c)
  =  (1-q)^{b} \sum_{n=0}^\infty q^n \, \chi(c^{[n]})
  \]
\end{definition}

\begin{remark}  Theorem A' asserts that when $c$ is
planar,
$n_h(c) = \deg_{[c]} \VV_h$. 
\end{remark}

\begin{proposition} \label{prop:bpsmultiplicativity}
  If $C$ has singularities
  $c_1, \ldots c_k$ and geometric genus
  $\tilde{g}$,
  \[n_h(C) \,\,\,\, = \!\!\!\!\!\!\! 
  \sum_{i_1 + \ldots + i_k + \tilde{g} = h} \!\!\!\!\!\!
  n_{i_1}(c_1)\cdots n_{i_k}(c_k)\]
\end{proposition}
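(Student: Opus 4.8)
The plan is to deduce the convolution formula from a factorization of the Hilbert-scheme generating series of $C$ over its points, after which the uniqueness clause of Proposition~\ref{prop:bpsform} promotes the identity to the level of the $n_h$. Write $U = C \setminus \{p_1, \ldots, p_k\}$ for the smooth locus, where $p_i$ carries the germ $c_i$. A length-$n$ subscheme $Z \subset C$ is the disjoint union of its localizations at the finitely many points of its support, and these localizations are mutually independent; partitioning by the length $n_i$ of the part of $Z$ supported at $p_i$ and the length $n_0 = n - \sum_i n_i$ of the part supported on $U$ gives a locally closed decomposition
\[
C^{[n]} = \bigsqcup_{n_0 + n_1 + \cdots + n_k = n} U^{[n_0]} \times \prod_{i=1}^k c_i^{[n_i]}.
\]
Since the Euler number is additive over locally closed strata and multiplicative over products, this yields the factorization
\[
\sum_{n} \chi(C^{[n]})\, q^n \;=\; \Big(\sum_{n} \chi(U^{[n]})\, q^n\Big) \prod_{i=1}^k \Big(\sum_{n} \chi(c_i^{[n]})\, q^n\Big),
\]
which is the form in which the statement is available from \cite[Appendix B]{PT3}.

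First I would evaluate the smooth factor: by Lemma~\ref{lem:macd} it equals $(1-q)^{-\chi(U)}$, and $\chi(U) = \chi(C) - k$. Feeding in the computation $\chi(C) = 2 - 2\tilde{g} + \sum_i (1 - b(c_i))$ recorded before (\ref{eq:vgminus1}) (which follows from the normalization being a bijection off the singular points with $b(c_i)$ preimages over $p_i$), the smooth factor becomes $(1-q)^{2\tilde{g} - 2 + \sum_i b(c_i)}$. Next I would substitute Definition~\ref{def:localbps} into each singular factor, writing $\sum_n \chi(c_i^{[n]}) q^n = (1-q)^{-b(c_i)} \sum_{h_i} q^{\delta(c_i) - h_i}(1-q)^{2h_i} n_{h_i}(c_i)$. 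The powers $(1-q)^{\sum_i b(c_i)}$ and $(1-q)^{-\sum_i b(c_i)}$ then cancel, leaving
\[
\sum_n \chi(C^{[n]})\, q^n = (1-q)^{2\tilde{g}-2} \sum_{h_1, \ldots, h_k} q^{\sum_i(\delta(c_i) - h_i)}\, (1-q)^{2\sum_i h_i} \prod_i n_{h_i}(c_i).
\]
Setting $h = \tilde{g} + \sum_i h_i$ and using $g = \tilde{g} + \sum_i \delta(c_i)$, the exponents collapse to $q^{g-h}(1-q)^{2h-2}$, so the right-hand side is exactly of the shape in Definition~\ref{def:bps} with $h$-coefficient $\sum_{i_1 + \cdots + i_k + \tilde{g} = h} n_{i_1}(c_1)\cdots n_{i_k}(c_k)$. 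The uniqueness clause of Proposition~\ref{prop:bpsform} identifies this coefficient with $n_h(C)$, which is the claim.

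The only genuine geometric content lies in the first step; the remainder is bookkeeping with the two defining generating functions. The hard part will be justifying the support decomposition — that $Z \mapsto (Z|_U, Z|_{p_1}, \ldots, Z|_{p_k})$ is an isomorphism of the stratum onto $U^{[n_0]} \times \prod_i c_i^{[n_i]}$ — which rests on the splitting of a zero-dimensional structure sheaf as the product of its stalks and is precisely the input I would draw from \cite[Appendix B]{PT3}. The one subtlety in the algebra is the cancellation of the branch-number powers $b(c_i)$: this is what makes the global invariants $g$ and $\tilde{g}$ emerge cleanly in the final exponents, and tracking it correctly depends on both the Euler-characteristic formula for $\chi(C)$ and the relation $g = \tilde{g} + \sum_i \delta(c_i)$.
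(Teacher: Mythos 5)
Your proposal is correct and follows the paper's own argument essentially verbatim: stratify $C^{[n]}$ by the lengths of the localizations at the singular points, evaluate the smooth factor by Macdonald's formula, substitute the two defining generating series, and match coefficients (the paper does this last step via the substitution $z^2 = q^{-1}(1-q)^2$, which is equivalent to your direct appeal to the uniqueness clause of Proposition~\ref{prop:bpsform}).
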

\begin{proof}
  Stratifying the Hilbert scheme of $C$ by the number of points
  at each of the $c_i$, we see
  \begin{eqnarray*}
    \sum_{n=0}^\infty q^n \chi(C^{[n]}) & = &
    \left(\sum_{n=0}^\infty q^n \chi((C \setminus 
      \coprod c_i)^{[n]})\right) \prod_i \left(
      \sum_{n=0}^\infty q^n \chi(c_i^{[n]})\right)  \\ 
    & = &
    (1-q)^{2\tilde{g}-2 + \sum b(c_i)} \prod_i 
    \sum_{n=0}^\infty q^n \chi(c_i^{[n]})
  \end{eqnarray*}
  Substituting in the definitions of the $n_h$, 
  \[ \sum_{h = 0}^g q^{g-h} (1-q)^{2h-2} n_h(C) = 
  (1-q)^{2\tilde{g}-2}  \prod_i \sum_{h=-\infty}^\delta 
  q^{\delta -h} (1 - q)^{2h} n_h(c)
  \]
  Collecting terms and writing $z^2 = q^{-1} (1-q)^2$, 
  \[ \sum_{h = 0}^g z^{2h} n_h(C) = 
  z^{2\tilde{g}}  \prod_i \sum_{h=-\infty}^\delta 
   z^{2h}  n_h(c)
  \]
  Comparing coefficients of $z$ yields the result.
\end{proof}

\begin{corollary} \label{cor:inrational} 
  If $C$ is a rational curve with a single singularity
  $c$, then $n_h(C) = n_h(c)$.  
\end{corollary}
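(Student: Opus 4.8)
The plan is to read this off directly from Proposition \ref{prop:bpsmultiplicativity}, which already establishes the general multiplicativity of the invariants $n_h$ under a decomposition into local singularity contributions. The corollary is precisely the degenerate instance of that proposition in which there is a single singularity and the normalization contributes nothing to the genus.

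Concretely, I would specialize Proposition \ref{prop:bpsmultiplicativity} by setting $k = 1$, so that the singularity data reduce to the single germ $c$, and by using rationality to put $\tilde{g} = 0$. With these choices the multi-index convolution
\[
n_h(C) = \!\!\!\!\sum_{i_1 + \cdots + i_k + \tilde{g} = h}\!\!\!\! n_{i_1}(c_1) \cdots n_{i_k}(c_k)
\]
collapses: the only summation variable is $i_1$, the constraint becomes $i_1 = h$, and every product is the single factor $n_{i_1}(c_1) = n_h(c)$. Hence $n_h(C) = n_h(c)$ for all $h$, which is the assertion. It is worth noting that the definitions are set up so that this matches: Definition \ref{def:bps} expands the Hilbert-scheme series of $C$ against the factor $(1-q)^{2h-2}$, while Definition \ref{def:localbps} uses $(1-q)^{2h}$ and an extra $(1-q)^b$, and the bookkeeping of the $z$-substitution in the proof of Proposition \ref{prop:bpsmultiplicativity} is exactly what reconciles these conventions when $\tilde{g}=0$.

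I do not expect any genuine obstacle here, since all of the substantive content—the stratification of $C^{[n]}$ by the distribution of points among the branches and singular points, and the multiplicativity of the generating series—has already been carried out in Proposition \ref{prop:bpsmultiplicativity}. The only point requiring a moment's care is the Gorenstein hypothesis implicit in the running conventions of this section: one should confirm that a rational curve with a single planar (hence Gorenstein) singularity falls within the class to which Proposition \ref{prop:bpsmultiplicativity} applies, so that the local invariant $n_h(c)$ of Definition \ref{def:localbps} is even defined. Granting that, the corollary is immediate.
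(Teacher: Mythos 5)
Your proof is correct and is exactly the argument the paper intends: the corollary is stated as an immediate consequence of Proposition \ref{prop:bpsmultiplicativity} (the paper gives no separate proof), obtained by setting $k=1$ and $\tilde{g}=0$ so the convolution collapses to $n_h(C)=n_h(c)$. Your remarks on the reconciliation of Definitions \ref{def:bps} and \ref{def:localbps} and on the Gorenstein hypothesis are accurate but not needed beyond what the proposition already supplies.
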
 
\begin{corollary} \label{cor:singsupport}
  For $c$ the germ of a plane curve singularity, 
  $n_h(c)$
  vanishes for $h < 0$. 
\end{corollary}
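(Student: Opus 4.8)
The plan is to reduce the assertion to the global vanishing already recorded in Proposition \ref{prop:vanishing}. By Definition \ref{def:localbps} the integer $n_h(c)$ depends only on the analytic germ $c$ (through $c^{[n]}$, which depends only on the completion), so I am free to realize $c$ on whatever global curve is most convenient. The key observation is that if I can produce a complete, integral, locally planar \emph{rational} curve $C$ whose unique singular point has germ analytically isomorphic to $c$, then Corollary \ref{cor:inrational} gives $n_h(c) = n_h(C)$, while Proposition \ref{prop:vanishing} gives $n_h(C) = 0$ for $h < 0$. The two together yield the corollary at once, so the entire content lies in constructing the global model, and this is the step I expect to be the main obstacle.

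To build such a $C$ I would take it to be the image of a morphism $\nu\colon \PP^1 \to \PP^N$. The germ $c$ has $b$ analytic branches, each carrying a parametrization $t \mapsto (x_i(t),y_i(t))$ with $x_i,y_i \in \CC[[t]]$; since a plane curve singularity is finitely determined, I may truncate these to polynomials without altering the analytic type. Choosing $b$ distinct points $p_1,\dots,p_b \in \PP^1$ with local parameters $t_i$, I impose that $\nu$ agree to sufficiently high order with branch $i$ at $p_i$, all branches sharing a single image point $q$ and a single tangent $2$-plane $P \cong \PP^2 \subset \PP^N$. These are finitely many linear conditions on the coefficients of a degree-$D$ map, and for $D$ and $N$ large they are simultaneously satisfiable; a generic solution is moreover an embedding away from $\{p_1,\dots,p_b\}$. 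Its image is then a rational curve whose only singularity is the germ $c$ at $q$.

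Two points require care in carrying this out. First, that matching finite jets of the parametrizations genuinely reproduces $c$, rather than some coarser singularity, is precisely the finite determinacy of plane curve singularities; this controls the order to which I must interpolate. Second, it is essential to allow $N > 2$: a rational plane curve of degree $d$ carries total $\delta$-invariant $\binom{d-1}{2}$, so a single singularity of arbitrary $\delta$ need not be realizable inside $\PP^2$, whereas in a large $\PP^N$ there is no such numerical constraint and the genericity of $\nu$ removes all unwanted singularities (a generic map $\PP^1 \to \PP^N$ with $N \ge 3$ is an embedding, so the only failure of injectivity or immersion is the one I have prescribed at $q$).

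If isolating a single singularity proves awkward, there is a softer fallback that avoids the high-order interpolation. Realize $c$ on a rational curve $C$ whose remaining singularities are nodes, and extract $n_h(c)$ from $n_h(C)$ by the multiplicativity of Proposition \ref{prop:bpsmultiplicativity}, using the node values $n_0 = n_1 = 1$ supplied by Corollaries \ref{cor:nodalcubic} and \ref{cor:inrational}. In the generating variable each node contributes a factor with lowest term in degree zero, so the smallest $h$ with $n_h(C)\neq 0$ coincides with the smallest $j$ with $n_j(c)\neq 0$; since $n_h(C)=0$ for $h<0$ by Proposition \ref{prop:vanishing}, that smallest index is nonnegative, which is exactly the desired vanishing. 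Either route funnels the conclusion through Proposition \ref{prop:vanishing}, the only genuinely global input.
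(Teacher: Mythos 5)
Your main argument is exactly the paper's (implicit) proof: the corollary is stated with no argument at all, the intended derivation being precisely to realize $c$ as the unique singularity of an integral, rational, locally planar curve $C$ and then combine Corollary \ref{cor:inrational} ($n_h(c)=n_h(C)$) with the vanishing $n_h(C)=0$ for $h<0$ from Proposition \ref{prop:vanishing}; the existence of such a rational model, which you construct carefully via parametrization and finite determinacy, is simply taken for granted in the paper (it is already invoked in the introduction for the curves $\overline{c}_i$). Your proposal is correct and follows the same route.
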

\begin{corollary} \label{cor:support}
  For $C$ a complete, locally planar curve, $n_h(C)$ vanishes 
  for $h < \tilde{g}(C)$.
\end{corollary}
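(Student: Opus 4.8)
The plan is to deduce this formally from the multiplicativity of Proposition \ref{prop:bpsmultiplicativity} combined with the local vanishing already recorded in Corollary \ref{cor:singsupport}. Writing $c_1, \ldots, c_k$ for the singularities of $C$, Proposition \ref{prop:bpsmultiplicativity} gives
\[n_h(C) = \sum_{i_1 + \cdots + i_k = h - \tilde{g}} n_{i_1}(c_1) \cdots n_{i_k}(c_k),\]
so the task reduces to controlling which terms in this sum can be nonzero. Since $C$ is locally planar, each germ $c_j$ is a plane curve singularity, and Corollary \ref{cor:singsupport} therefore applies to give $n_{i_j}(c_j) = 0$ whenever $i_j < 0$.

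First I would observe that any term surviving in the sum must have $i_j \ge 0$ for every $j$, since otherwise the corresponding factor vanishes and kills the whole product. Then, because the indices of a surviving term satisfy $\sum_j i_j = h - \tilde{g}$ with each $i_j \ge 0$, a nonzero term forces $h - \tilde{g} \ge 0$, that is $h \ge \tilde{g}$. Consequently, for $h < \tilde{g}$ the indexing set of the sum contains no surviving term, and $n_h(C) = 0$, as claimed.

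The hard part here is essentially absent: the entire content is carried by the two cited statements, and no new computation with Hilbert schemes is required. The only point that genuinely needs checking is that the hypotheses line up, namely that the planarity of $C$ descends to each germ $c_j$ so that the sharper Corollary \ref{cor:singsupport} is applicable (rather than merely the Gorenstein vanishing of Proposition \ref{prop:vanishing}), and that integrality of $C$ is in force so that $\tilde{g}$ is unambiguously the genus of the normalization. I would state these verifications briefly and then read off the conclusion directly from the displayed identity.
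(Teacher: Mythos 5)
Your proof is correct and matches the paper's intended argument: Corollary \ref{cor:support} is stated as an immediate consequence of Proposition \ref{prop:bpsmultiplicativity} together with the local vanishing of Corollary \ref{cor:singsupport}, which is exactly the deduction you carry out. Nothing further is needed.
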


\begin{corollary} \label{cor:nodalcalc}
  If $C$ is nodal of geometric genus $\tilde{g}$ and
  arithmetic genus $g$, then 
  $n_h(C) = {g-\tilde{g} \choose g-h}$. 
\end{corollary}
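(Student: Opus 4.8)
The plan is to reduce everything to the multiplicativity formula of Proposition \ref{prop:bpsmultiplicativity} together with the local BPS invariants of a single node. First I would observe that a nodal curve $C$ of geometric genus $\tilde{g}$ and arithmetic genus $g$ has exactly $k = g - \tilde{g}$ nodes: each node is a singularity with delta invariant $1$, and the delta invariants of the singularities sum to $g - \tilde{g}$.

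Next I would record the local invariants of a node. By Corollary \ref{cor:nodalcubic}, the rational curve $\PP^1_{\mathrm{node}}$ with a single node satisfies $n_0 = n_1 = 1$ and $n_h = 0$ otherwise. By Corollary \ref{cor:inrational}, for a rational curve with a single singularity these global invariants agree with the local invariants $n_h(c)$ of that singularity. Hence for a node $c$ we have $n_0(c) = n_1(c) = 1$ and all other $n_h(c)$ vanish.

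Then I would substitute into Proposition \ref{prop:bpsmultiplicativity}. Writing $c_1, \ldots, c_k$ for the nodes, the formula reads
\[
n_h(C) = \sum_{i_1 + \cdots + i_k = h - \tilde{g}} n_{i_1}(c_1) \cdots n_{i_k}(c_k).
\]
Each factor equals $1$ when its index lies in $\{0,1\}$ and $0$ otherwise, so a nonzero summand corresponds exactly to a choice of which nodes carry the index $1$, of which there must be $h - \tilde{g}$. Counting such choices gives $\binom{k}{h-\tilde{g}} = \binom{g-\tilde{g}}{h-\tilde{g}}$, and the symmetry $\binom{g-\tilde{g}}{h-\tilde{g}} = \binom{g-\tilde{g}}{g-h}$ yields the stated formula.

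There is essentially no obstacle: all the content sits in the two cited corollaries and in multiplicativity, and the final step is the elementary fact that summing products of these indicator values over compositions into parts of size $0$ or $1$ counts subsets of the set of nodes. The only points requiring care are the bookkeeping of the genus shift by $\tilde{g}$ in the multiplicativity formula and the binomial symmetry used to match the desired indexing, both handled above.
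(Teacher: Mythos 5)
Your proof is correct and follows essentially the route the paper intends: the corollary is stated there without an explicit argument precisely because it is the immediate combination of the node computation (Corollaries \ref{cor:nodalcubic} and \ref{cor:inrational} giving $n_0=n_1=1$ for the germ of a node) with the multiplicativity of Proposition \ref{prop:bpsmultiplicativity}, which is exactly the chain you carry out. The bookkeeping of the shift by $\tilde{g}$ and the binomial symmetry are both handled correctly.
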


\begin{corollary} \label{cor:equiv} The following are equivalent:

\vspace{1mm}
\noindent Theorem A:
    For a family $\mathcal{C} \to \Lambda$ 
    of integral
    locally planar curves locally versal at $\lambda$, 
    $n_h(C_\lambda) = \deg_\lambda \Lambda_h$.
\noindent Theorem A':  For  $c$ a plane curve singularity, 
    $n_h(c)\, = \deg_{[c]} \VV_h\,$.
\end{corollary}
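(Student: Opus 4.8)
The plan is to observe that the local invariants $n_h(c)$ and the local multiplicities $\deg_{[c]}\VV_h$ obey the \emph{same} multiplicative rule when assembled into a global curve, so that the global identity of Theorem A is equivalent to the local identity of Theorem A' applied one singularity at a time. Concretely, for a curve $\mathcal{C}_\lambda$ with singularities $c_1,\dots,c_k$ and geometric genus $\tilde g$, Proposition \ref{prop:bpsmultiplicativity} expresses $n_{\tilde g + m}(\mathcal{C}_\lambda)$ as $\sum_{m = \sum_i h_i}\prod_i n_{h_i}(c_i)$, whereas Equation (\ref{eq:multprodformula}) expresses $\deg_\lambda\Lambda_{\tilde g + m}$ as $\sum_{m=\sum_i h_i}\prod_i \deg_{[c_i]}\VV_{h_i}(c_i)$. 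These two formulas have identical shape, with $n_{h_i}(c_i)$ playing the role of $\deg_{[c_i]}\VV_{h_i}(c_i)$.

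First I would prove that Theorem A' implies Theorem A. Assuming $n_{h_i}(c_i)=\deg_{[c_i]}\VV_{h_i}(c_i)$ for every singular germ $c_i$ of $\mathcal{C}_\lambda$, substitution into the two product formulas above immediately yields $n_{\tilde g + m}(\mathcal{C}_\lambda)=\deg_\lambda\Lambda_{\tilde g + m}$ for every $m \ge 0$. For $h < \tilde g$ both sides vanish: the left by Corollary \ref{cor:support}, and the right because $\lambda\notin\Lambda_h$ by semicontinuity, so that the multiplicity there is zero. Hence $n_h(\mathcal{C}_\lambda)=\deg_\lambda\Lambda_h$ for all $h$, which is Theorem A.

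Conversely, I would deduce Theorem A' from Theorem A by specializing to a curve with a single singularity. Given a plane curve germ $c$, choose a complete, integral, locally planar curve $\mathcal{C}_\lambda$ whose only singular point has germ analytically isomorphic to $c$, together with a projective flat family over a smooth base that is locally versal at $\lambda$. Since there is exactly one singularity, the $k=1$ cases of Proposition \ref{prop:bpsmultiplicativity} and Equation (\ref{eq:multprodformula}) read $n_{\tilde g + m}(\mathcal{C}_\lambda)=n_m(c)$ and $\deg_\lambda\Lambda_{\tilde g + m}=\deg_{[c]}\VV_m(c)$, where $\tilde g$ is the geometric genus (one may take $\mathcal{C}_\lambda$ rational, so that $\tilde g = 0$ and the first identity is exactly Corollary \ref{cor:inrational}). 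Theorem A equates the two left-hand sides, whence $n_m(c)=\deg_{[c]}\VV_m(c)$ for all $m$, which is Theorem A'.

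The main obstacle is the geometric input required for this second direction: one must know that an arbitrary plane curve germ $c$ is realized as the \emph{unique} singularity of some complete integral locally planar curve, and that this curve sits inside a projective flat family over a smooth base which is locally versal at it. The existence of such rational curves is classical --- it is precisely the hypothesis already invoked in the introduction to derive Equation (\ref{eq:vgg}) --- and the existence of a locally versal family follows, as in the Severi-variety arguments of \cite{DH,T}, by working inside a sufficiently positive linear system so that the local-to-global map from deformations of $\mathcal{C}_\lambda$ to the product of the miniversal deformations $\VV(c_i)$ becomes smooth. Granting these standard realization facts, the equivalence is then purely formal.
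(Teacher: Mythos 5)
Your proposal is correct and follows essentially the same route as the paper: the ``$A' \Rightarrow A$'' direction by matching Equation (\ref{eq:multprodformula}) against Proposition \ref{prop:bpsmultiplicativity}, and the converse by specializing to a locally versal family of curves with a unique singular point. You simply spell out the details (including the realization of a germ as the unique singularity of an integral curve in a locally versal family) that the paper leaves implicit.
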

\begin{proof}
The ``$A' \implies A$'' direction follows from comparing the relation between 
the multiplicities asserted in Equation \ref{eq:multprodformula}
with the relation between the $n_h$ established in 
Proposition \ref{prop:bpsmultiplicativity}.  To see
``$A \implies A'$'', consider locally 
versal deformations of curves with unique singular points.
\end{proof}

  We now remark on the relation between smoothness of relative Hilbert schemes
  and relative compactified Jacobians.  The result and its proof 
  are closely analogous to \cite[Theorem 4]{PT3}.

  \begin{proposition}
    Let $\mathcal{C} \to S$ be a family over a smooth base
    of complete integral Gorenstein
    curves of arithmetic genus $g$.  Then the following are equivalent: 
    \begin{enumerate}
    \item The total space of the relative Hilbert scheme 
      $\mathcal{C}^{[n]}_S$ is smooth for some $n \ge 2g-1$.      
    \item The total space of the relative compactified Jacobian
      $\overline{\mathrm{Pic}}^0(\mathcal{C}/S)$ is smooth.
    \item The total space of the relative Hilbert scheme  
      $\mathcal{C}^{[n]}_S$ is smooth for all $n \ge 2g-1$.
    \item The total space of the relative Hilbert scheme 
      $\mathcal{C}^{[n]}_S$ is smooth for all $n$.
    \end{enumerate}
  \end{proposition}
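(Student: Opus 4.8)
The plan is to route everything through the Abel--Jacobi morphism $AJ_n : \mathcal{C}^{[n]}_S \to \picbar^n(\mathcal{C}/S)$, $[Z] \mapsto I_Z^*$, whose fibre over $F$ is $\PP(\mathrm{H}^0(F))$ as in the proof of Proposition \ref{prop:vanishing}. First I would dispose of the three ``large $n$'' conditions at once. For $n \ge 2g-1$ and any rank one torsion free sheaf $F$ of degree $n$ on a fibre, Lemma \ref{lem:hart} gives $\mathrm{H}^1(F) = \mathrm{H}^0(\omega \otimes F^*)^* = 0$, since $d(\omega \otimes F^*) = 2g-2-n < 0$ and a rank one torsion free sheaf of negative degree has no sections; hence $\mathrm{h}^0(F) = n+1-g$ is constant and $AJ_n$ is a Zariski-locally trivial $\PP^{n-g}$-bundle over $\picbar^n(\mathcal{C}/S)$ \cite{AK}. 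A projective bundle is smooth if and only if its base is, and the choice of a relative degree one line bundle identifies $\picbar^n(\mathcal{C}/S) \cong \picbar^0(\mathcal{C}/S)$; this yields the equivalences (1) $\Leftrightarrow$ (2) $\Leftrightarrow$ (3). Since (4) trivially implies (1), the entire content of the proposition is the single implication (2) $\Rightarrow$ (4): smoothness of the compactified Jacobian must force smoothness of $\mathcal{C}^{[n]}_S$ for every $n$, including the range $n < 2g-1$ where $AJ_n$ is no longer flat.

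To reach small $n$ from large $n$, I would add a fixed divisor. Since smoothness is \'etale local, after an \'etale base change on $S$ I may choose, for $N \ge \max(n,2g-1)$, a relative effective divisor $D \subset \mathcal{C}$ of degree $N-n$ supported in the relative smooth locus. Twisting ideals by $\oO(-D)$ gives a closed immersion $\mathcal{C}^{[n]}_S \hookrightarrow \mathcal{C}^{[N]}_S$, $[Z] \mapsto [Z+D]$, with image the incidence locus $\{Z' \supset D\}$. In the pair description $(F,\sigma)$ of $\mathcal{C}^{[N]}_S$ this locus is exactly the vanishing of $\sigma|_D$, so it is the zero scheme of a section of the rank $N-n$ bundle $\mathcal{E}$ whose fibre is $\mathrm{H}^0(F|_D)$ (a vector bundle because $D$ lies in the smooth locus), twisted by the relative $\oO(1)$. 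Granting (2), the ambient $\mathcal{C}^{[N]}_S$ is smooth of dimension $\dim S + N$ by the first paragraph. Because an integral locally planar curve has $C^{[n]}$ irreducible of dimension $n$ \cite{AK}, every fibre of $\mathcal{C}^{[n]}_S \to S$ has dimension $n$, so $\mathcal{C}^{[n]}_S$ is pure of dimension $\dim S + n$, i.e.\ of the expected codimension $N-n$. Hence the cutting section is regular and $\mathcal{C}^{[n]}_S$ is a local complete intersection; smoothness is now equivalent to transversality of that section, i.e.\ to the tangent bound $\dim T_{(s_0,Z)}\mathcal{C}^{[n]}_S \le \dim S + n$.

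For the last step I would compare deformation theories directly. Over the smooth base $S$ the relative Hilbert scheme tangent space sits in
\[ 0 \to \mathrm{Hom}_{\mathcal{C}_{s_0}}(I_Z,\oO_Z) \to T_{(s_0,Z)}\mathcal{C}^{[n]}_S \to T_{s_0}S \xrightarrow{\ \kappa\ } \Ext^1_{\mathcal{C}_{s_0}}(I_Z,\oO_Z), \]
so smoothness at $(s_0,Z)$ holds exactly when $\kappa$ has rank equal to the excess $\dim \mathrm{Hom}(I_Z,\oO_Z)-n$ of the fibre Hilbert scheme. The analogous sequence for the Jacobian,
\[ 0 \to \Ext^1_{\mathcal{C}_{s_0}}(F,F) \to T_{(s_0,F)}\picbar^0(\mathcal{C}/S) \to T_{s_0}S \xrightarrow{\ \kappa'\ } \Ext^2_{\mathcal{C}_{s_0}}(F,F), \]
makes smoothness of $\picbar^0(\mathcal{C}/S)$ equivalent to $\kappa'$ having rank equal to the excess $\dim \Ext^1(F,F)-g$ of the fibre Jacobian. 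With $F = I_Z^*$ these two problems are linked by the section $\oO \to F$; passing to the two-term pair complex $I^\bullet = \{\oO \to F\}$ of \cite{PT3} and its $\Ext^\bullet(I^\bullet,I^\bullet)$ relates $\kappa$ and $\kappa'$ through a long exact sequence in which the correction terms are the cohomology of $\oO$ and $F$, controlled for $n$ large by the vanishing $\mathrm{H}^1(F)=0$ of the first paragraph and propagated to small $n$ by the divisor $D$.

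\emph{The hard part} is precisely this homological comparison: showing that $\kappa$ and $\kappa'$ have matching ranks, so that the base $S$ cuts down the excess tangent directions of the fibre Hilbert scheme exactly when it cuts down those of the fibre Jacobian. This is the whole weight of the statement, and I expect it to require the careful identification of the pair deformation complex with $\mathrm{R}\mathrm{Hom}(I_Z,\oO_Z)$ and $\mathrm{R}\mathrm{Hom}(F,F)$ carried out in \cite[Theorem 4]{PT3}, adapted so as to hold uniformly in $n$ rather than only in the stable range; everything preceding it (the projective bundle structure, the reduction by a fixed divisor, and the complete-intersection dimension count) is formal once that comparison is in hand.
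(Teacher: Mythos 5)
Your first paragraph reproduces the paper's treatment of $(1) \Leftrightarrow (2) \Leftrightarrow (3)$: the $\PP^{n-g}$-bundle structure of $AJ_n$ for $n \ge 2g-1$, plus a relative degree-one twist to identify the $\picbar^n$ for varying $n$. The gap is in $(2) \Rightarrow (4)$. You reduce everything to the claim that the obstruction maps $\kappa$ and $\kappa'$ have matching ranks, and you explicitly do not prove this --- you only ``expect'' it to follow from an adaptation of \cite[Theorem 4]{PT3} made uniform in $n$. But that uniformity is precisely the content of the implication in the range $n < 2g-1$, where $\mathrm{H}^1(F) \ne 0$ and the correction terms in your proposed long exact sequence do not vanish; so the proposal does not close. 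There is also a secondary problem with the dimension count: you invoke ``$C^{[n]}$ irreducible of dimension $n$ for an integral locally planar curve,'' but the proposition is stated for Gorenstein curves, and for non-planar Gorenstein singularities the punctual Hilbert schemes can have excess dimension, so the incidence locus $\{Z' \supset D\}$ need not have the expected codimension and the regular-section/lci step is unjustified under the stated hypotheses.

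The paper's proof avoids all of this. Having chosen a section of $\mathcal{C} \to S$ through the relative smooth locus, it observes that adding the corresponding reduced point $x$ to a subscheme $Z$ supported away from $x$ exhibits $\mathcal{C}^{[n+1]}_S$, analytically locally near $[Z+x]$, as the product of a neighborhood of $[Z]$ in $\mathcal{C}^{[n]}_S$ with a disc: the disc records where the extra point sits on the curve, which is smooth over $S$ near $x$. A direct factor of a smooth space is smooth, so smoothness descends from $n+1$ to $n$ at $[Z]$, and varying the section covers all of $\mathcal{C}^{[n]}_S$; this gives $(3) \Rightarrow (4)$ by descending induction with no deformation theory, no vector bundle, and no transversality argument. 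Your divisor $D$ is geometrically the same device, but by treating the incidence locus as the zero scheme of a section of a bundle rather than as a locally split factor you manufacture the hard homological comparison that the paper never has to face. Replacing your second and third paragraphs with the local product observation would complete the proof.
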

  \begin{proof} \label{prop:smoothcompare}
    It suffices to take $S$ a small polydisc.  
    As in Proposition \ref{prop:vanishing}, Riemann-Roch for
    Gorenstein curves ensures that the Abel-Jacobi map 
    $\mathcal{C}^{[n]} \to \overline{\mathrm{Pic}}^n(\mathcal{C}/S)$
    is a bundle with fibres $\PP^{n-g}$ once $n \ge 2g-1$.
    Choose a section of $S \to \mathcal{C}$ with image in the smooth locus
    of each fibre gives a line bundle of relative degree $1$ over $S$,
    to induce identifications
    between $\overline{\mathrm{Pic}}^n(\mathcal{C}/S)$ for varying $n$. 
    Thus $(i) \implies (ii)$ and $(ii) \implies (iii)$.   
    The section also induces an embedding 
    $\mathcal{C}^{[n]}_S \subset \mathcal{C}^{[n+1]}_S$.  For 
    $p \in \mathcal{C}^{[n]}_S$ corresponding to a subscheme 
    supported away from the section, some
    analytic neighborhood $p \in U \subset \mathcal{C}^{[n+1]}_S$ is
    analytically a product of  $\overline{U} = U \cap \mathcal{C}^{[n]}_S$
    with a disc.  Thus if $\mathcal{C}^{[n+1]}_S$ is smooth, so is
    $\overline{U}$.  By choosing different sections, we may cover 
    $\mathcal{C}^{[n]}_S$ with such neighborhoods. 
    Thus $(iii) \implies (iv)$.  It is clear that $(iv) \implies (i)$.
  \end{proof}

  \begin{corollary} \label{cor:smoothcompare}
    Let $\mathcal{C} \to \Lambda$ be a family of integral locally planar
    curves, locally versal at $\lambda \in \Lambda$.  Let $\delta$ be 
    the difference between the arithmetic and geometric genera of
    the curve $\mathcal{C}_\lambda$. Then for any $h$, any $k \ge \delta$,
    and
    any generic, sufficiently small $\lambda \in \DD^k \subset \Lambda$, 
    the relative Hilbert scheme $\mathcal{C}^{[h]}_{\DD^k}$ is smooth.
  \end{corollary}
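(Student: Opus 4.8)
My plan is to prove the statement in two moves: first show that over the \emph{full} versal base the relative Hilbert scheme is smooth for every $h$, and then obtain $\mathcal{C}^{[h]}_{\DD^k}$ as a transverse slice of that smooth total space. Write $\rho\colon\mathcal{C}^{[h]}_{\Lambda}\to\Lambda$ for the projection from the relative Hilbert scheme over the full base. Granting (a) that $\mathcal{C}^{[h]}_\Lambda$ is smooth and (b) that the corank of $d\rho$ at any point of the central fibre $\rho^{-1}(\lambda)=\mathcal{C}_\lambda^{[h]}$ is at most $\delta$, a generic $k$-dimensional subpolydisc $\DD^k\ni\lambda$ with $k\ge\delta$ meets $\rho$ transversally, so that $\mathcal{C}^{[h]}_{\DD^k}=\rho^{-1}(\DD^k)$ is smooth. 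Thus everything comes down to (a) and (b).

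For (a) I would argue locally. Smoothness is local on the total space, and since $\pi\colon\Lambda\to\prod_i\VV(c_i)$ is smooth I may treat the singularities one at a time; away from the singular points the relative Hilbert scheme is manifestly smooth. Fixing a single germ $c_i$, I embed it in a complete rational curve $\overline{c}_i$ with that one singularity, so that its arithmetic genus is $\delta(c_i)$ and the stabilization threshold $2g-1$ of Proposition \ref{prop:smoothcompare} drops to $2\delta(c_i)-1$. To reach \emph{all} $h$ using only the range $h\le k$ of Theorem B, I would enlarge $\VV(c_i)$ by trivial (dummy) deformation parameters, keeping the family locally versal, and apply Theorem B to a generic subpolydisc of the enlarged base that projects submersively onto $\VV(c_i)$; smoothness of that relative Hilbert scheme then descends along the smooth projection to give smoothness of $\overline{c}_i^{[h]}$ over $\VV(c_i)$ for every $h$. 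Reassembling the local factors yields (a).

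The crux is (b), the corank bound. Unwinding definitions, the corank of $d\rho$ at a length-$h$ subscheme $Z$ equals the excess $\dim T_Z\mathcal{C}_\lambda^{[h]}-h$ of the tangent space to the Hilbert scheme of the special fibre, so (b) is the estimate $\dim T_Z\mathcal{C}_\lambda^{[h]}\le h+\delta$ for every $Z$. This is precisely where local planarity and the invariant $\delta$ enter, and I expect it to require a genuine local computation with the punctual Hilbert schemes of the germs $c_i$ --- it is strictly stronger than the bound ``$\le h$'' that Theorem B alone provides. A secondary difficulty is that $\DD^k$ is forced through the non-generic point $\lambda$, over which the fibre $\mathcal{C}_\lambda^{[h]}$ is singular and becomes a common base locus of the pulled-back hyperplanes; along $\rho^{-1}(\lambda)$ a naive Bertini argument fails, so I would verify transversality there by hand from the corank bound and invoke genericity only over the punctured disc $\DD^k\setminus\{\lambda\}$.
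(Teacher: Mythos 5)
Your global picture --- a smooth relative Hilbert scheme over the whole versal base, sliced by a transverse $\DD^k$ --- is the right geometry, and your part (a) is fine in substance (the paper establishes it directly in Proposition \ref{prop:smoothoverversal} by choosing $T_fV$ to contain all polynomials of degree $\le h$; the detour through dummy parameters is unnecessary). But the proof has a genuine gap, concentrated exactly where the paper's two inputs enter. First, the corank bound (b), i.e.\ $\dim T_Z\mathcal{C}_\lambda^{[h]}\le h+\delta$ for every $Z$, is asserted but not proved: saying you ``expect it to require a genuine local computation'' is an acknowledgement that you have no argument. The bound is true, but the only available route to it is the one the paper takes: for $n\ge 2g-1$ the Abel--Jacobi map exhibits $\mathcal{C}_\lambda^{[n]}$ as a $\PP^{n-g}$-bundle over $\picbar^n(\mathcal{C}_\lambda)$, the tangent spaces of the relative compactified Jacobian are controlled by Fantechi--G\"ottsche--van Straten \cite{FGvS}, and the section trick in the proof of Proposition \ref{prop:smoothcompare} propagates the bound down to all $n$. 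There is no independent ``local computation with punctual Hilbert schemes'' on offer.

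Second, and more seriously, even granting (b) the transversality step fails as stated. The locus of $k$-planes in $T_\lambda\Lambda$ not transverse to a single subspace $\mathrm{im}\,d\rho_Z$ of codimension $c\le\delta$ has codimension only $k-c+1\ge k-\delta+1$ in the Grassmannian, and you must avoid these loci simultaneously for all $Z$ in the positive-dimensional fibre $\mathcal{C}_\lambda^{[h]}$; once $h>k-\delta$ their union can a priori be everything, so ``a generic $\DD^k$ meets $\rho$ transversally'' does not follow from the pointwise corank bound. You flag this (``a naive Bertini argument fails'') but the proposed repair --- verifying transversality ``by hand from the corank bound'' --- cannot work, because the corank bound says nothing about how the images $\mathrm{im}\,d\rho_Z$ vary with $Z$. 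What rescues the argument is precisely the content of \cite{FGvS}: all these images are governed by one fixed geometric object, the reduced tangent cone of the equigeneric ($\delta$-constant) stratum, of codimension $\delta$, so a single generic $\DD^k$ with $k\ge\delta$ is transverse to all of them at once. The paper's proof is accordingly a two-line deduction: quote \cite{FGvS} for smoothness of the relative compactified Jacobian over such a $\DD^k$, then apply Proposition \ref{prop:smoothcompare} to convert that into smoothness of $\mathcal{C}^{[h]}_{\DD^k}$ for every $h$. Both ingredients are absent from your proposal, and both are needed.
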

  \begin{proof}
    It is shown in \cite{FGvS} that the relative compactified Jacobian
    over $\DD^k$ is smooth in this situation; more precisely, 
    smoothness holds once $T_\lambda \DD^k$ is transverse to the
    reduced tangent cone of the equigeneric stratum.  
    The result now follows from Proposition \ref{prop:smoothcompare}
  \end{proof}

\section{The proof of Theorem A} \label{sec:multiplicity}

  \begin{theorem}
    Let $\mathcal{C} \to \Lambda$ be a family of integral locally 
    planar curves.  Assume $\Lambda$ is locally versal at 
    $\lambda$.  Let $\Lambda_h$ be the locus of curves
    of geometric genus $\le h$.  Then 
    $n_h(\mathcal{C}_\lambda) = \deg_\lambda \Lambda_h$.
  \end{theorem}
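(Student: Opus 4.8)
The plan is to prove the equality $n_h(\mathcal{C}_\lambda) = \deg_\lambda \Lambda_h$ by realizing both sides through the geometry of the relative Hilbert scheme over a generic polydisc, using Theorem B to guarantee smoothness. By Corollary \ref{cor:equiv} it suffices to treat the local statement, but I prefer to argue globally and let the multiplicativity of Proposition \ref{prop:bpsmultiplicativity} and Equation \eqref{eq:multprodformula} handle the reduction. The overarching strategy, following \cite{FGvS} and \cite{PT3}, is to compute $\chi(\mathcal{C}_\lambda^{[n]})$ in two ways: intrinsically via Definition \ref{def:bps}, and by a deformation-to-the-generic-fibre argument that expresses it in terms of the multiplicities $\deg_\lambda \Lambda_h$.

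First I would fix a generic $k$-dimensional polydisc $\lambda \in \DD^k \subset \Lambda$ with $k \ge \delta = g - \tilde g$, so that by Theorem B (or Corollary \ref{cor:smoothcompare}) the total space of the relative Hilbert scheme $\mathcal{C}^{[h]}_{\DD^k}$ is smooth for $h \le k$. The key point is that for a smooth total space the map $\mathcal{C}^{[h]}_{\DD^k} \to \DD^k$ has fibres whose Euler characteristics can be related across the stratification: over the open stratum the generic fibre is a smooth (or mildly singular) curve whose Hilbert-scheme Euler numbers are controlled by Lemma \ref{lem:macd} and Corollary \ref{cor:nodalcalc}, while the special fibre over $\lambda$ contributes $\chi(\mathcal{C}_\lambda^{[h]})$. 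The plan is to integrate the constructible function $\lambda' \mapsto \chi(\mathcal{C}_{\lambda'}^{[h]})$ over $\DD^k$ against the Euler-characteristic measure, stratifying $\DD^k$ by the loci $\Lambda_{\tilde g + j}$; on the generic point of each stratum the fibre is nodal with exactly the expected number of nodes, so Corollary \ref{cor:nodalcalc} gives its $n_h$ explicitly as binomial coefficients.

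The mechanism that converts this into the stated identity is the interplay between the two generating-series conventions: the nodal building blocks contribute factors of the form $(1-q)$-powers times binomials, and summing the contributions of the strata weighted by their local multiplicities $\deg_\lambda \Lambda_{\tilde g + j}$ reproduces exactly the right-hand side $\sum_h q^{g-h}(1-q)^{2h-2}\deg_\lambda \Lambda_h$. Concretely, I would show that smoothness of $\mathcal{C}^{[h]}_{\DD^k}$ forces the relative Hilbert scheme to be, near $\lambda$, a well-behaved family whose total Euler number decomposes according to the Severi stratification, and then match coefficients using the uniqueness in Proposition \ref{prop:bpsform}. The anchoring cases \eqref{eq:vg} and \eqref{eq:vgminus1}, reproving $n_g = 1$ and $n_{g-1} = \chi(\mathcal{C}_\lambda) + 2g-2$, serve as consistency checks that the normalizations in Definitions \ref{def:bps} and \ref{def:localbps} are compatible with the geometric count.

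The hard part will be the deformation argument itself: establishing that the Euler-characteristic contribution of a neighborhood of $\lambda$ in the smooth total space $\mathcal{C}^{[h]}_{\DD^k}$ is genuinely governed by the multiplicities $\deg_\lambda \Lambda_h$ and not by finer analytic data of the singularities. This is where smoothness from Theorem B is indispensable, since it lets one replace a delicate local intersection-theoretic computation by a clean fibration/stratification argument, and where the transversality of $T_\lambda \DD^k$ to the reduced tangent cone of the equigeneric stratum (invoked in Corollary \ref{cor:smoothcompare}) must be used to ensure that the generic slice meets each Severi stratum in precisely $\deg_\lambda \Lambda_h$ reduced points with the expected nodal fibres. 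Once that geometric input is in place, the remaining manipulation is the formal power-series bookkeeping already packaged in Propositions \ref{prop:bpsform} and \ref{prop:bpsmultiplicativity}.
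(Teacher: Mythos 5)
Your proposal assembles the right ingredients (Theorem B, the triangularity of Proposition \ref{prop:bpsform}, the nodal computation of Corollary \ref{cor:nodalcalc}, the support statement) but is missing the one idea that actually makes the argument close, and you have the dimension of the slice wrong. The multiplicity $\deg_\lambda \Lambda_h$ is by definition the number of points in which $\Lambda_h$ meets a generic slice of \emph{complementary} dimension $g-h$; your generic polydisc of dimension $k \ge \delta = g-\tilde g$ is strictly too large for every $h > \tilde g$, so it meets $\Lambda_h$ in a positive-dimensional set and cannot produce the finite count you need. More importantly, ``integrating the constructible function $\chi_i$ over $\DD^k$ against the Euler-characteristic measure'' is a single computation with nothing to equate it to: the identity you want is an equality between data at $\lambda$ and data at nearby generic points, and that requires comparing \emph{two} slices, not stratifying one.

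The paper's mechanism, which you do not articulate and which is exactly the ``hard part'' you flag, is this: take $D = \DD^{g-h}\times\DD$ and compare the parallel slices $D_0 = \DD^{g-h}\times\{0\}$ (through $\lambda$, meeting $\Lambda_h$ only at $\lambda$) and $D_\epsilon = \DD^{g-h}\times\{\epsilon\}$ (meeting $\Lambda_h$ transversally in $\deg_\lambda\Lambda_h$ nodal curves). Theorem B with $k = g-h$ makes $\mathcal{C}^{[i]}_{D_0}$ and $\mathcal{C}^{[i]}_{D_\epsilon}$ smooth and deformation equivalent for $i \le g-h$, hence diffeomorphic, hence $\chi(D_0,\chi_i) = \chi(D_\epsilon,\chi_i)$; the triangular change of variables converts this into $\chi(D_0,n_j) = \chi(D_\epsilon,n_j)$ for $j \ge h$ (note that smoothness only up to index $g-h$ is precisely enough to control $n_g,\dots,n_h$ and no more); and the support of $n_h$ on $\Lambda_h$ together with $n_h = 1$ on nodal genus-$h$ curves localizes both sides to $n_h(\mathcal{C}_\lambda)$ and $\#(D_\epsilon\cap\Lambda_h)$ respectively. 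Without the deformation-invariance comparison between the special and generic parallel slices, your outline does not yield the theorem; with it, the rest is the bookkeeping you describe.
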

  \begin{proof}  
    The RHS of this equality vanishes unless $\tilde{g} \le h \le g$ since in
    this case $\lambda \notin \Lambda_h$; the LHS vanishes as well by 
    Proposition \ref{prop:vanishing} and Corollary \ref{cor:support}.  
    So assume $\tilde{g} \le h \le g$.  Then 
    $\Lambda_h$  is of pure codimension $g-h$, 
    and is the closure of the locus $\Lambda^+_h$ of curves with
    $g-h$ nodes \cite{DH, T}.  

    Choose a small polydisc $\lambda \in 
    D = \DD^{g-h} \times \DD \subset \Lambda$ subject to:
    
    \begin{enumerate}
    \item  $D_0 := \DD^{g-h} \times \{0\}$ intersects $\Lambda_h$ 
      only at $\lambda$. 
    \item $D_\epsilon:= \DD^{g-h} \times \{\epsilon \}$ intersects 
      $\Lambda_h$ generically,
      i.e., at $\deg_\lambda \Lambda_h$ points of $\Lambda_h^+$.  That is,
      the points of intersection correspond to 
      nodal curves of genus $h$.
    \item The relative Hilbert schemes 
      $\mathcal{C}_D^{[i]}$, $\mathcal{C}_{D_0}^{[i]}$, $\mathcal{C}_{D_\epsilon}^{[i]}$
      are smooth for $i \le g-h$. 
    \end{enumerate}  
    
    Each of these conditions is generically true -- the third by Theorem B, 
    which we prove as Corollary \ref{cor:smooth} 
    in Section \ref{sec:smoothness} -- so we may satisfy them all
    simultaneously.   By condition (iii) above, $\mathcal{C}_{D_0}^{[i]}$
    and $\mathcal{C}_{D_\epsilon}^{[i]}$ are deformation equivalent smooth
    varieties 
    for $i \le g-h$.  In particular, they are diffeomorphic, and
    hence have the same Euler numbers.

    We define
    constructible functions 
    $n_i$, $\chi_i:\Lambda \to \ZZ$ by their values on the fibres: 
    for $p \in \Lambda$, 
    \begin{eqnarray*}
      n_i: p & \mapsto & n_i(\mathcal{C}_p) \\
      \chi_i: p & \mapsto & \chi(\mathcal{C}_p^{[i]})
    \end{eqnarray*}
    Observe that
    $ \chi(D_0, \chi_i) = \chi(\mathcal{C}_{D_0}^{[i]}) = 
    \chi(\mathcal{C}_{D_\epsilon}^{[i]})
    = \chi(D_\epsilon, \chi_i)$ for 
    $i \le g-h$.  But by Proposition \ref{prop:bpsform}, there is a linear
    change of variables between the 
    $\chi_0, \ldots, \chi_{g-h}$ and the $n_g, \ldots, n_h$.  Therefore,
    \[ \chi(D_0, n_j) = \chi(D_\epsilon, n_j) \,\,\,\,\,\,\,\mbox{for }
    g \ge j \ge h  \ \]
    As we know from Corollary 
    \ref{cor:support} that $n_h$ is supported on $\Lambda_h$, 
    \[ n_h(\mathcal{C}_\lambda) = \chi(D_0, n_h)  = \chi(D_\epsilon, n_j) = 
    \!\!\! \sum_{p \in D_\epsilon \cap \Lambda_{h}} \!\! n_h(\mathcal{C}_p) = \#D_\epsilon 
    \cap  \Lambda_h = \deg_\lambda \Lambda_h \]
    We have already explained the first two equalities.  The third holds
    again because $n_h$ is supported on $\Lambda_h$.  The fourth because 
    each $\mathcal{C}_p$ is nodal of geometric genus $h$ so 
    $n_h(\mathcal{C}_p) = 1$ by Corollary \ref{cor:nodalcalc}.  The final
    equality holds by definition of the multiplicity.
  \end{proof}

\section{Smoothness of relative Hilbert schemes} \label{sec:smoothness}
  Let $V \subset \CC[x,y]$ be a  finite dimensional smooth 
  family of polynomials, and consider the family of curves
  \[\mathcal{C}_V := \{(f \in V, p \in \CC^2)\,|\,f(p)=0\} 
  \subset V \times \CC^2\]
  We have
  $\mathcal{C}_V^{[k]} \subset V \times (\CC^2)^{[k]}$. The Hilbert
  scheme of points on a surface is smooth \cite{F}, and
  for
  $I \subset \CC[x,y]$ the tangent space is
  $T_I (\CC^2)^{[k]} =  \mathrm{Hom}_{\CC[x,y]}(I, \CC[x,y]/I)$, where
  a map $\eta$ corresponds to the tangent vector 
  $I(\eta) := \{\phi + \epsilon \phi' \,|\, 
  \phi \in I,\, \eta(\phi) = \phi' \bmod I \} \subset 
  \CC[x,y,\epsilon]/\epsilon^2$. 
  Writing $\tilde{I}$ for the image of $I$ in $\CC[x,y]/f$,  we have an
  exact sequence:
  \begin{equation} \label{eq:defseq}
  0 \to T_{(f,\tilde{I})}\mathcal{C}_{V}^{[k]} \to T_f V \times 
  T_I (\CC^2)^{[k]} 
  \xrightarrow{(f+\epsilon g,\,  \eta)\, \mapsto \, \eta(f) - g \bmod I}  
  \CC[x,y]/I
  \end{equation}
  If $f$ is squarefree, then all fibres in a neighborhood of
  $f \in U \subset V$ will be reduced, and the relative Hilbert
  schemes $\mathcal{C}_U^{[k]}$ are
  reduced, of  pure dimension $k + \dim V$, and locally 
  complete intersections
  \cite{BGS}.  Thus for squarefree $f$, the space 
  $\mathcal{C}_{V}^{[k]}$ is smooth
  at $(f,\tilde{I})$ if and only if 
  $\dim T_{(f,\tilde{I})} \mathcal{C}_{V}^{[k]} = k + \dim V$.  By counting dimensions
  this occurs  if and only if the final map of Sequence (\ref{eq:defseq}) 
  is surjective.  The easiest
  way to ensure this is to ask for surjectivity already at
  $\eta = 0$, i.e., that $T_f V \twoheadrightarrow \CC[x,y]/I$.

  We now recall basic notions from the deformation theory of singularities; 
  for details we refer
  to \cite{GLS}.  Let $(X,x)$ be the germ of a complex analytic space.  A
  deformation of $(X,x)$ is a flat morphism of
  germs of complex analytic spaces, $(\mathscr{X},x) \to (B,b)$, together 
  with an isomorphism from $(X,x)$ to the fibre over $b$.  A deformation
  $(\mathscr{X},x) \to (\VV,v)$ is said to be {\em versal} if given a
  flat morphism $(\mathscr{Y},y) \to (A,a)$, a closed subgerm 
  $(A',a) \subset (A,a)$, a map $\phi': (A',a)\to (\VV,v)$ and an 
  isomorphism of deformations 
  $(\mathscr{Y}|_{A'},y) \cong_{A'} (\mathscr{X}|_{A'},x)$, 
  there is a (nonunique) extension $\phi:(A,a)\to (\VV,v)$ of $\phi'$ which
  admits a compatible isomorphism $(\mathscr{Y},y) \cong_A (\mathscr{X}|_{A},x)$. 
  If the Zariski tangent map to $\phi$ {\em is} always uniquely determined
  by the given data, then $(\mathscr{X},x) \to (\VV,v)$ is said to be 
  {\em miniversal}.  The existence of a versal 
  deformation $(\mathscr{X},x)\to (\VV,v)$ guarantees the existence
  of a miniversal $(\overline{\mathscr{X}},\overline{x})\to 
  (\overline{\VV},\overline{v})$
  and moreover there are compatible isomorphisms
  $(\VV,v) \cong (\overline{\VV},\overline{v}) \times (\CC^k,0)$ and
  $(\mathscr{X},x) \cong (\overline{\mathscr{X}},\overline{x}) \times
  (\CC^k,0)$.

  The miniversal deformation of an isolated plane curve singularity
  has an explicit description.  Let $(C,0)$ be the germ at the 
  origin of the zero locus of 
  some $f \in (x,y)\CC[x,y]$.  Fix $g_1\ldots g_{\tau} \in \CC[x,y]$ 
  whose images form a basis of the vector
  space $\mathcal{T}^1 = \CC[x,y]/(f,\partial_x f, \partial_y f)$.  
  Then consider $F:\CC^{\tau} \times \CC^2 \to \CC^{\tau} \times \CC$
  given by $F(t_1,\ldots,t_{\tau},x,y) = 
  (t_1,\ldots,t_\tau,(f + \sum g_i t_i)(x,y))$.  Taking the fibre
  over $\CC^{\tau} \times 0$ gives a family of curves over
  $\CC^{\tau}$; taking germs at the origin gives the miniversal deformation
  $(\mathcal{C},0) \to (\CC^\tau,0)$ 
  of $(C,0)$.  Moreover, if $g'_1,\ldots, g'_s \in \CC[x,y]$ are any
  functions and $(\mathcal{C}',0) \to (\CC^s,0)$ the analogously formed
  deformation of $(C,0)$, then the tangent map $\CC^s \to \mathcal{T}^1$
  is just induced by the quotient 
  $\CC[x,y] \to \CC[x,y]/(f,\partial_x f, \partial_y f)$.  As soon as
  $\CC^s \twoheadrightarrow \mathcal{T}^1$, 
  the family $\mathcal{C}' \to (\CC^s,0)$
  is itself versal.

  \begin{proposition} \label{prop:smoothoverversal}
    Let $(C,0)$ be the analytic germ of a plane curve singularity
    and let $(\mathcal{C},0) \to (\VV,0)$ be an analytically versal
    deformation of $(C,0)$.  For sufficiently small representatives
    $\mathcal{C} \to \VV$, the relative Hilbert scheme 
    $\mathcal{C}_\VV^{[k]}$ is smooth.
  \end{proposition}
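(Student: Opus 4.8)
The plan is to verify the infinitesimal criterion pointwise: for every $(f,\tilde{I})$ in $\mathcal{C}_\VV^{[k]}$ lying over a sufficiently small representative, I would show that the final map of Sequence (\ref{eq:defseq}) is surjective. First note that since $(C,0)$ is reduced its defining equation is squarefree, and by \cite{BGS} the nearby fibres are reduced as well; on this locus $\mathcal{C}_\VV^{[k]}$ is reduced, of pure dimension $k + \dim\VV$, and a local complete intersection, so smoothness at $(f,\tilde{I})$ is equivalent to the tangent space having the expected dimension. Comparing $\dim T_f\VV + \dim T_I(\CC^2)^{[k]} = \dim\VV + 2k$ against the target $\dim \CC[x,y]/I = k$, this amounts exactly to surjectivity of
\[
T_f\VV \times T_I(\CC^2)^{[k]} \xrightarrow{\ (g,\eta)\,\mapsto\, \eta(f)-g \bmod I\ } \CC[x,y]/I .
\]
Writing $\oO=\CC[x,y]$, let $E\subseteq \oO/I$ be the image of $\eta\mapsto\eta(f)$ and $W\subseteq\oO/I$ the image of $g\mapsto g\bmod I$; surjectivity is the statement $E+W=\oO/I$.

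The crucial point, which I would isolate first, is that $E$ already contains the image $\bar{J}$ of the Tjurina ideal $J=(f,\partial_x f,\partial_y f)$. For any $u,v\in\oO$ the rule $\eta_{u,v}\colon \phi\mapsto -(u\,\partial_x\phi+v\,\partial_y\phi)\bmod I$ defines an $\oO$-linear map $I\to\oO/I$, since for $a\in\oO$ and $\phi\in I$ one computes $\eta_{u,v}(a\phi)-a\,\eta_{u,v}(\phi)=-(u\,\partial_x a+v\,\partial_y a)\,\phi$, which lies in $I$ because $\phi\in I$. As $f\in I$, this gives $\eta_{u,v}(f)=-(u\,\partial_x f+v\,\partial_y f)\bmod I\in E$; letting $u,v$ range over $\oO$ shows the image of $(\partial_x f,\partial_y f)$, hence of $J$, lies in $E$.

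It then remains to produce the complementary directions from versality. The subscheme $Z$ cut out by $I$ is supported at finitely many points $p$ of $\{f=0\}$; at the smooth points the Tjurina ideal is the unit ideal, so the quotient $(\oO/I)/\bar{J}$ is supported at the singular points and is a quotient of $\bigoplus_p \mathcal{T}^1_{f,p}$. Invoking openness of versality \cite{GLS}, after shrinking the representative the family $\VV$ restricts to a versal deformation of the singularities of every nearby fibre, so $T_f\VV$ surjects onto $\bigoplus_p \mathcal{T}^1_{f,p}$ and hence onto $(\oO/I)/\bar{J}$; that is, $W+\bar{J}=\oO/I$. Combined with $\bar{J}\subseteq E$ this yields $E+W=\oO/I$, giving surjectivity and thus smoothness. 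The hard part will be this last step: one must control versality not only at the central fibre but uniformly across all nearby fibres, including at subschemes supported at smooth points of the curve, and arrange a single small representative that works simultaneously. By contrast the algebraic heart — that evaluation at $f$ already absorbs the whole Tjurina ideal, so that versality is only needed to cover the residual $\mathcal{T}^1$-directions — is the short computation of the middle paragraph.
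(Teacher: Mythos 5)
Your argument is correct, but it is not the route the paper takes. The paper offers two proofs: the first simply quotes smoothness of the relative compactified Jacobian over a versal base from \cite{FGvS} and transfers it to the Hilbert schemes via the Abel--Jacobi equivalence of Proposition \ref{prop:smoothcompare}; the second (``direct'') argument sidesteps any analysis of the $\eta$-component of Sequence (\ref{eq:defseq}) by enlarging the family so that $T_f\VV$ contains all polynomials of degree $\le k$ --- then the $g$-component alone surjects onto $\CC[x,y]/I$ --- and afterwards descends to the miniversal (hence to any versal) deformation using the product decomposition $\VV \cong \overline{\VV}\times(\CC^t,0)$. You instead prove surjectivity over the given versal base directly, by observing that the derivations $\eta_{u,v}:\phi\mapsto -(u\,\partial_x\phi+v\,\partial_y\phi)$ are honest elements of $\mathrm{Hom}_{\oO}(I,\oO/I)$ whose values at $f$ sweep out the image of the Tjurina ideal, so that versality need only supply the residual $\mathcal{T}^1$-directions; openness of versality \cite{GLS} then handles nearby fibres. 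Each approach buys something: the paper's is shorter because surjectivity is trivial for the enlarged family, at the cost of the descent step; yours requires no enlargement and isolates exactly how much of the base is needed --- and in fact your key computation is precisely what justifies the step ``Thus the final map of Sequence (\ref{eq:defseq}) is surjective'' in the paper's proof of Theorem \ref{thm:smooth}, where only a $\DD^k$ transverse to ideals inside $\mathcal{T}^1$ (Lemma \ref{lem:transversetoideals}) is available and the Tjurina directions must come from the $\eta$'s. Two small points to tidy: Sequence (\ref{eq:defseq}) lives in the embedded model $\VV\subset\CC[x,y]$, so for an abstract analytically versal deformation you still owe the one-line reduction via $\VV\cong\overline{\VV}\times(\CC^t,0)$ that the paper records; and you should say explicitly that within a sufficiently small representative the only singular point of the central fibre is the origin, so that plain versality (rather than its openness) suffices there.
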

  \begin{proof}
    The relative compactified Jacobian over such a family 
    is known to be smooth 
    \cite[Corollary B.2]{FGvS}, 
    so the result follows from Corollary \ref{cor:smoothcompare}. 
    
    The following direct argument was suggested to us by Rahul 
    Pandharipande. 
    Choose
    $\VV \subset \CC[x,y]$ containing an equation $f$ determining $(C,0)$, 
    such that $(\mathcal{C}_\VV,0) \to (\VV,f)$ determines a versal family 
    for this singularity, and 
    such that $T_f \VV$ contains all polynomials of degree $ \le k$. 
    Then $T_f \VV$ projects surjectively onto $\CC[x,y]/I$ for any
    $I$ of colength $k$, hence by Sequence (\ref{eq:defseq}) 
    the space $\mathcal{C}_{\VV}^{[k]}$ is smooth.
    Now let $\overline{\mathcal{C}} \to \overline{\VV}$ be the 
    miniversal deformation.  By versality there 
    are compatible isomorphisms $\VV \cong \overline{\VV} 
    \times (\CC^t,0)$ and 
    $\mathcal{C} \cong \overline{\mathcal{C}} \times (\CC^t,0)$ 
    \cite[p. 237]{GLS}, 
    and hence also     $\mathcal{C}_{\VV}^{[k]} \cong
    \overline{\mathcal{C}}_{\overline{\VV}}^{[k]} \times (\CC^t,0)$.  
    Thus smoothness of the relative Hilbert schemes over any versal deformation
    is equivalent to smoothness of relative Hilbert schemes over the miniversal
    deformation.    
  \end{proof}

  For fixed $I$ of colength $k$, a generic choice of
  $k$-dimensional $V$ ensures surjectivity of the final map in 
  Sequence (\ref{eq:defseq}).  We must now show that some fixed
  $V$ works for all $I$ containing the equation of the curve.

  \begin{lemma}\label{lem:transversetoideals}
    Let $\oO$ be the complete local ring at a point on a reduced curve, and
    let $\bar{\oO}$ be a finite length quotient of $\oO$.  Let
    $W \subset \bar{\oO}$ be a generic $k$ dimensional vector subspace. 
    Then for $\bar{I}$ the image in $\bar{\oO}$ of any ideal
    of colength $\le k$ in $\tilde{\oO}$, we have $W + \bar{I} = \bar{\oO}$.
  \end{lemma}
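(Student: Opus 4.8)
The plan is to trade the a priori infinite family of ideals for a finite one by passing to the normalization, which is the feature that makes the statement true (and which is special to the normalization). Write $\tilde{\oO}$ for the normalization of $\oO$. Since $\oO$ is the complete local ring of a point on a reduced complex curve, $\tilde{\oO} \cong \prod_{i=1}^{b} \CC[[t_i]]$ is a finite product of power series rings, one for each analytic branch. Every finite-colength ideal of $\tilde{\oO}$ is then a product $\prod_i (t_i^{a_i})$ of powers of the maximal ideals, of colength $\sum_i a_i$; in particular there are only finitely many ideals $\mathcal{I} \subset \tilde{\oO}$ of colength $\le k$. Reducing to this finite list is the crux, and I expect it to be the only step requiring real thought; the remainder is a routine genericity argument.

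With finiteness secured, I would first bound codimensions. For $\mathcal{I} \subset \tilde{\oO}$ of colength $\le k$, the set $\mathcal{I} \cap \oO$ is an ideal of $\oO$, and $\oO \hookrightarrow \tilde{\oO}$ induces an injection $\oO/(\mathcal{I} \cap \oO) \hookrightarrow \tilde{\oO}/\mathcal{I}$; hence $\dim_\CC \oO/(\mathcal{I} \cap \oO) \le \dim_\CC \tilde{\oO}/\mathcal{I} \le k$. The image $\bar I \subset \bar{\oO}$ of $\mathcal{I} \cap \oO$ is a further quotient, so its codimension $c := \dim_\CC \bar{\oO}/\bar I$ satisfies $c \le k$.

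Next I would dispatch a single subspace. For any $U \subset \bar{\oO}$ of codimension $c \le k$, a generic $k$-dimensional $W$ satisfies $W + U = \bar{\oO}$: the condition $W + U \ne \bar{\oO}$ says the composite $W \hookrightarrow \bar{\oO} \twoheadrightarrow \bar{\oO}/U$ is not surjective, i.e. $\dim(W \cap U) \ge k - c + 1$, which is a Schubert condition cutting out a closed subvariety of the Grassmannian of $k$-planes in $\bar{\oO}$. This subvariety is proper, since $c \le k$ lets us choose $k$ elements of $\bar{\oO}$ whose images span the $c$-dimensional space $\bar{\oO}/U$; so the good locus is dense open.

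Finally I would combine these. Applying the previous paragraph to each of the finitely many $\bar I$ produced by the finitely many $\mathcal{I}$ of colength $\le k$ yields finitely many dense open conditions on $W$, whose intersection is still dense open; any $W$ therein satisfies $W + \bar I = \bar{\oO}$ for all such $\bar I$ at once. The load-bearing step remains the passage to $\tilde{\oO}$ in the first paragraph, together with the codimension bound $c \le k$ that keeps each individual constraint satisfiable by a $k$-dimensional subspace; with ideals of $\oO$ in place of $\tilde{\oO}$ the family would be positive-dimensional and the conclusion could fail.
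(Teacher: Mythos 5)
There is a genuine gap, and it sits exactly where you located the crux. The tilde in ``colength $\le k$ in $\tilde{\oO}$'' is a stray piece of notation: the lemma must apply to every ideal of colength $\le k$ in $\oO$ itself. This is forced both by the phrase ``the image in $\bar{\oO}$'' (there is no natural map from the normalization to $\bar{\oO}$) and, decisively, by the application in Theorem \ref{thm:smooth}, where the ideals that arise are arbitrary colength-$h$ ideals $I\subset\CC[[x,y]]$ containing $f$, i.e.\ arbitrary finite-colength ideals of $\oO$; the paper's own proof accordingly begins ``Fix an ideal $\tilde I\subset\oO$.'' These ideals form a positive-dimensional family in general: for the node $\oO=\CC[[x,y]]/(xy)$ the colength-$2$ ideals are the pencil $M^2+\CC\cdot(x+\lambda y)$, $\lambda\in\PP^1$, and only the two monomial members $(x,y^2)$ and $(y,x^2)$ arise from (or contain contractions of comparable colength of) ideals of $\CC[[t_1]]\times\CC[[t_2]]$. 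The genericity condition you impose on $W$ therefore says nothing about, say, $W+\overline{(x+y)}$, and your closing sentence --- that with ideals of $\oO$ in place of the normalization ``the conclusion could fail'' --- asserts the negation of what the lemma claims.

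The conclusion does hold for the full positive-dimensional family, and the paper's device for cutting it down to finitely many open conditions is not a finiteness of ideals but the order map to the value semigroup. Fix $\oO\subset\CC[[t]]^{\oplus r}$ and let $\ord$ record the $r$-tuple of leading orders of non-zero-divisors; set $\Sigma=\ord(\oO)\setminus\ord(M^k)$ and choose a basis $\{f_s\}_{s\in\Sigma}$ of $\bar\oO=\oO/M^k$ with $\ord(f_s)=s$. For any ideal $I$ the projection $\pi_{\ord(I)\cap\Sigma}$ restricts to an isomorphism on $\bar I$, so $W+\bar I=\bar\oO$ is equivalent to surjectivity of $\pi_{\Sigma\setminus\ord(I)}|_W$ --- a condition depending only on the finite set $\Sigma\setminus\ord(I)$, whose cardinality is the colength. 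There are finitely many subsets of $\Sigma$ of size $\le k$, each giving a nonempty open condition on the Grassmannian, and one intersects them. Your second and third paragraphs (the codimension bound and the Schubert-cell genericity argument) are correct and in fact reappear essentially verbatim at the corresponding step of the paper's proof; the error is only that they are fed the wrong, too-small family of subspaces.
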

  \begin{proof}
    We employ the semigroup of the curve.  
    Fix a normalization $\oO \subset \CC[[t]]^{\oplus r}$.  Define
    \[\ord: \CC[[t]]^{\oplus r} \setminus \{ \mbox{ zero divisors } \} 
    \to \NN^{\oplus r}\] 
    which takes an r-tuple of power series to the r-tuple of degrees of 
    leading elements.  Removing the zero divisors ensures this is well 
    defined. 
    
    All colength $k$ ideals will contain the $k$th power of the maximal
    ideal $M$. 
    Let $\bar{\oO} = \oO/M^k$, and $\Sigma = \ord(\oO) \setminus \ord(M^k)$.  
    If $\ord(f) = \ord(g)$, then some linear 
    combination of $f$ and $g$ has higher order.  
    Therefore we may choose
    a vector space basis of $\bar{\oO}$ of the form
    $\{f_s, s\in \Sigma | \ord(f_s) = s\}$.  For any $\Delta \subset \Sigma$, 
    we define the projection $\pi_\Delta:\bar{\oO} \to 
    \mathrm{Span}\,\{f_s\}_{s \in \Delta}$ by 
    \[
    \pi_\Delta: \sum_{s \in \Sigma}  c_s f_s  \mapsto  \sum_{s \in \Delta} c_s f_s
    \]
    
    Fix an ideal $\tilde{I} \subset \oO$ and write $\overline{I} = I/M^k$. 
    Let $\iota = \ord(\tilde{I})$.  Then $\pi_{\iota \cap \Sigma}|_{\overline{I}}$ is 
    a vector space isomorphism.  
    Thus for a sub vector space $W \subset \bar{\oO}$, we have 
    $W + \bar{I} = \bar{\oO}$  if and only if
    $\pi_{\Sigma \setminus \iota}|_W$ is surjective.
    This determines a Zariski open locus in the Grassmannian of 
    $(\dim W)$ dimensional subspaces of $\bar{\oO}$, which is
    nonempty  if
    $\dim W \ge \# \Sigma \setminus \iota = \dim \bar{\oO}/\bar{I}
    = \dim \oO / (\tilde{I} + M^k)$.  It suffices for
    $\dim W \ge \dim \oO/\tilde{I}$. 
    
    Thus requiring that $\pi_{\{s_1,\ldots,s_k\}}|_W$ is surjective for all
    $\{ s_1, \ldots, s_k \} \subset \Sigma$ 
    ensures that $W$ is transverse to all
    ideals of colength bounded by $k$.  
    The intersection of these finitely many
    nonempty Zariski open sets remains a nonempty Zariski open set.
  \end{proof}


  \begin{theorem} \label{thm:smooth}
    Let $(C,0)$ be the analytic germ of a plane curve singularity
    and let $(\mathcal{C},0) \to (\VV,0)$ be an analytically versal
    deformation of $(C,0)$. Then, for sufficiently small representatives
    $\mathcal{C} \to \VV$, and generic discs $0 \in \DD^k \subset \VV$,
    the space $\mathcal{C}_{\DD^k}^{[h]}$ is smooth for $h \le k$.
  \end{theorem}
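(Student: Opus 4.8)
The plan is to reduce the smoothness of $\mathcal{C}_{\DD^k}^{[h]}$ to a single surjectivity statement supplied by the deformation sequence (\ref{eq:defseq}), and then to exhibit one $k$-dimensional direction that forces this surjectivity along the whole slice. By Proposition \ref{prop:smoothoverversal} and its proof it is enough to work with the concrete linear model $\VV \subset \CC[x,y]$ in which $T_f\VV$ contains the polynomials of degree $\le k$: there the total space $\mathcal{C}_\VV^{[h]}$ is already smooth, and after shrinking every fibre $\mathcal{C}_v$ is reduced. Write $\pi \colon \mathcal{C}_\VV^{[h]} \to \VV$ for the projection and take $\DD^k \subset \VV$ to be a linear subdisc through the central point with constant tangent space $W$. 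At a point $(v,I) \in \mathcal{C}_{\DD^k}^{[h]}$ the sequence (\ref{eq:defseq}) shows that $\mathcal{C}_{\DD^k}^{[h]}$ is smooth once the composite $W \hookrightarrow \CC[x,y] \twoheadrightarrow \CC[x,y]/I$ is surjective, this being the restriction of the final map to $\eta = 0$. Since the subscheme lies on $\mathcal{C}_v$ we have $f_v \in I$, so the target is a quotient of $\oO_{\mathcal{C}_v}$ of dimension $h \le k$, and a $k$-dimensional $W$ can surject onto it.

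Next I would pin down $W$ using Lemma \ref{lem:transversetoideals} applied to the central local ring $\oO_{C,0}$. Choosing the image of $W$ in the finite-length quotient $\oO_{C,0}/M^k$ to be a generic $k$-dimensional subspace --- which is possible because $\VV$, containing the low-degree polynomials, surjects onto $\oO_{C,0}/M^k$ --- the lemma yields $W + \overline{I} = \oO_{C,0}/M^k$, i.e. $W \twoheadrightarrow \oO_{C,0}/\overline{I} = \CC[x,y]/I$, for every ideal of colength $\le k$ on $C$, and in particular for every length-$h$ subscheme supported at the singular point $0$. Thus the surjectivity needed in the previous paragraph holds everywhere along the central fibre $\pi^{-1}(0) = c^{[h]}$, simultaneously for all $h \le k$, so a single disc will serve all the required Hilbert schemes at once.

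It remains to spread this transversality to the nearby fibres, on which $0$ has broken up into several points. For this let $Z \subset \mathcal{C}_\VV^{[h]}$ be the closed locus where $W \to \CC[x,y]/I$ fails to be surjective; the previous step shows $Z \cap \pi^{-1}(0) = \emptyset$. Restricting attention to subschemes supported in a fixed compact ball about $0$ makes $\pi$ proper, so $\pi(Z)$ is a closed subset of $\VV$ not containing the central point; shrinking $\DD^k$ so that it avoids $\pi(Z)$, every point of $\mathcal{C}_{\DD^k}^{[h]}$ satisfies the surjectivity condition and the slice is smooth. Subschemes whose support meets the smooth part of the fibres lie in the smooth ambient surface and present no difficulty.

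The step I expect to be the real obstacle is this last propagation. Lemma \ref{lem:transversetoideals} only controls ideals in a single local ring, whereas on a fibre $\mathcal{C}_v$ with $v \ne 0$ a length-$h$ subscheme may be split among the nodes into which $0$ deforms, so that $\CC[x,y]/I$ becomes a product of quotients of distinct local rings, to which the lemma does not literally apply. Routing the argument through the closedness of $Z$ and the properness of $\pi$ on the punctual locus is precisely the device that avoids analysing these product quotients: one establishes transversality only on the central fibre, where all the mass is concentrated at $0$, and then lets openness carry it to a neighbourhood. In writing this out I would take care that $h \le k$ is used exactly to keep $\dim \CC[x,y]/I \le k$ so that Lemma \ref{lem:transversetoideals} applies, and that genericity of $W$ within $\VV$ genuinely induces genericity of its image in $\oO_{C,0}/M^k$.
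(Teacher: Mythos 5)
Your proposal follows essentially the same route as the paper: reduce via Proposition \ref{prop:smoothoverversal} to a linear model $\VV \subset \CC[x,y]$, use the sequence (\ref{eq:defseq}) at $\eta=0$ to turn smoothness into surjectivity of $T_0\DD^k \to \CC[x,y]/I$, invoke Lemma \ref{lem:transversetoideals} for all colength-$\le k$ ideals at the singular point, and spread to nearby fibres by openness of the smooth locus plus properness of the punctual Hilbert scheme (the paper defers this last step to the ``compactness argument'' of Corollary \ref{cor:smooth}). One small slip: $\pi^{-1}(0)$ is $\mathcal{C}_0^{[h]}$, not $c^{[h]}$, and your locus $Z$ of non-surjectivity \emph{can} meet the central fibre at subschemes with support partly away from the origin (where $W$ was not arranged to be transverse), so you should either define $Z$ as the non-smooth locus or, as the paper does, note that near such a point $\mathcal{C}_{\DD^k}^{[h]}$ is analytically the product of a neighbourhood in $\mathcal{C}_{\DD^k}^{[h']}$ (punctual part at the singularity) with a smooth factor coming from the smooth locus of the fibres.
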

  \begin{proof}
    As in Proposition \ref{prop:smoothoverversal}, 
    it suffices to show
    this for any versal deformation $\VV$.  Let $(C,0)$ be given
    by the germ at the origin of the zero locus of 
    $f \in \CC[x,y]$, and choose $g_1,\ldots,g_\tau$ whose images
    in $\CC[[x,y]]/(f,\partial_x f, \partial_y f)$ form a basis; as
    discussed above the miniversal deformation 
    $\mathcal{C} \to \VV = \CC^\tau$ has as fibres the curves
    $f+\sum t_i g_i = 0$.  Let $0 \in \DD^k \subset \VV$ be a generic,
    $k$-dimensional disc.  Lemma \ref{lem:transversetoideals} ensures
    that the image of its tangent space in 
    $\CC[[x,y]]/(f,\partial_x f, \partial_y f)$ is complementary to any
    ideal of colength $h \le k$.  Thus the final map of Sequence 
    (\ref{eq:defseq}) is surjective, and  
    $\mathcal{C}_{\DD^k}^{[h]}$ is smooth at points over $0 \in \DD^k$ 
    which correspond to subschemes supported at the singularity.  
    Finally let $z \subset \mathcal{C}_0$ be any subscheme of length $h$;
    let $z'$ be its component supported at the singularity, say of length
    $h'$.  Then
    an analytic neighborhood of $z$ in $\mathcal{C}_{\DD^k}^{[h]}$ differs
    from an analytic neighborhood of $z'$ in $\mathcal{C}_{\DD^k}^{[h']}$
    by a smooth factor.
  \end{proof}

  \begin{corollary} \label{cor:smooth}
    Let $\mathcal{C} \to \Lambda$ be a family of integral locally planar
    curves, locally versal at $\lambda \in \Lambda$.  Then
    for any generic, sufficiently small $\lambda \in \DD^k \subset \Lambda$, 
    the relative Hilbert scheme $\mathcal{C}^{[h]}_{\DD^k}$ is smooth for
    $h \le k$. 
  \end{corollary}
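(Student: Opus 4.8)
The plan is to globalise Theorem \ref{thm:smooth}, which already settles the case of a curve with a single singularity, by running its local argument at each singular point of $\mathcal{C}_\lambda$ and then controlling all the points at once by means of the smoothness of the classifying map $\pi$.

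First I would reduce to checking smoothness of $\mathcal{C}^{[h]}_{\DD^k}$ along its central fibre. Since the relative Hilbert scheme is projective over $\DD^k$, the fibre $\mathcal{C}^{[h]}_\lambda$ is compact; the smooth locus of the total space is open, and by properness, once we know the total space is smooth at every point of $\mathcal{C}^{[h]}_\lambda$, shrinking $\DD^k$ draws the whole relative Hilbert scheme into the corresponding open neighbourhood. So it suffices to prove smoothness at every $[Z]$ with $Z \subset \mathcal{C}_\lambda$ of length $h \le k$.

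As in the opening of Section \ref{sec:smoothness}, smoothness at such a point is equivalent to the relevant deformation map being surjective, a condition that is analytic-local and therefore decouples over the finitely many points of the support of $Z$. Writing $Z = \coprod_p Z_p$ with $\ell_p = \mathrm{length}(Z_p)$, the factor at a smooth point of $\mathcal{C}_\lambda$ is automatically unobstructed, while the factor at a singular point $c_i$ is governed, through the local planar model and Sequence (\ref{eq:defseq}), by the transversality criterion of Lemma \ref{lem:transversetoideals} applied to the image of $T_\lambda \DD^k$ in the Tjurina algebra $\mathcal{T}^1_i$ of $c_i$. Concretely, I need this image to surject onto the colength-$\ell_{c_i}$ quotient attached to $Z_{c_i}$, for every singular point simultaneously.

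The crux, and the step I expect to be the main obstacle, is producing a \emph{single} generic $\DD^k$ that works at all singular points at once. Here I would use that local versality makes the Kodaira--Spencer map $T_\lambda\Lambda \to \bigoplus_i \mathcal{T}^1_i$ surjective, this being exactly the smoothness of $\pi=(\pi_1,\dots,\pi_m):\Lambda \to \prod_i \VV(c_i)$. Because $Z$ has total length $\sum_p \ell_p = h \le k$, the combined target of the decoupled transversality conditions has dimension $\sum_i \ell_{c_i} \le k$, so a generic $k$-plane $T_\lambda\DD^k$ surjects onto it; as in Lemma \ref{lem:transversetoideals} the relevant data is captured by finitely many order-sets, so these amount to finitely many nonempty Zariski-open conditions on the Grassmannian, whose intersection remains nonempty. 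A generic $\DD^k$ tangent to such a plane therefore makes every local deformation map surjective, and hence $\mathcal{C}^{[h]}_{\DD^k}$ smooth for $h \le k$. It is precisely the inequality $\sum_i \ell_{c_i}\le k$, forced by $h\le k$, that allows one disc to pay simultaneously for the colengths at all the singularities.
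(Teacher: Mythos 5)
Your proposal is correct and follows essentially the same route as the paper, whose own proof of this corollary is the one‑line observation that the family is analytically locally smooth over the situation of Theorem \ref{thm:smooth} (via the classifying map $\pi$ to $\prod_i \VV(c_i)$) plus a compactness argument to get a uniform neighbourhood. You simply unpack this: your reduction to the central fibre is the paper's compactness step, and your use of the surjectivity of $T_\lambda\Lambda \to \bigoplus_i \mathcal{T}^1_i$ to make one generic $\DD^k$ transverse at all singular points simultaneously (possible since the total colength is $\le k$) is exactly the content of ``locally smooth over the theorem,'' made explicit for subschemes supported at several singularities.
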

  \begin{proof}
    This situation is analytically locally smooth over that in the 
    theorem; a compactness argument yields smoothness uniformly
    over an open neighborhood on the base.
  \end{proof}

\section{ADE singularities} \label{sec:examples}

A singularity is said to be simple if it has no nontrivial equisingular
deformations.  Simple singularities of hypersurfaces famously fall into an
ADE classification \cite{AGV}: 

\begin{itemize}
  \item $A_n: y^2 + x^{n+1}$
  \item $D_n: xy^2 + x^{n-1}$
  \item $E_6: y^3 + x^4$ 
  \item $E_7: y^3 + yx^3$ 
  \item $E_8: y^3 + x^5$         
\end{itemize}

We calculate the Euler numbers of some related Hilbert schemes. 
Consider the non-reduced germs at the origin 
$A_\infty: y^2 = 0$, $D_\infty: xy^2 = 0$,
and $E_\infty: y^3 = 0$.  In each case the curve is preserved by the 
full $\CC^* \times \CC^*$ action on $\CC^2$.  The action lifts
to the Hilbert schemes, and the fixed points are monomial ideals in
$\CC[[x,y]]$ containing the equation.  Counting fixed points
gives the following formulas:

\begin{eqnarray*}
  \sum q^n \chi(A_{\infty}^{[n]}) & = & \frac{1}{(1-q)(1-q^2)}  \\
  \sum q^n \chi(D_{\infty}^{[n]}) & = & \frac{1-q+q^3}{(1-q)^2(1-q^2)} \\
  \sum q^n \chi(E_{\infty}^{[n]}) & = & \frac{1}{(1-q)(1-q^2)(1-q^3)} 
\end{eqnarray*}

We now observe that the equation for any simple singularity is
equal to its ``$\infty$'' version modulo $(x,y)^{\delta}$, where 
$\delta$ is the delta invariant of the singularity.  In particular,
the first $\delta$ punctual Hilbert schemes are equal as subvarieties
of the punctual Hilbert scheme of $\CC^2$ at the origin. 
By Corollary \ref{cor:singsupport}, their Euler characteristics suffice to 
determine the whole series.  Explicitly, we have:

\begin{itemize}
  \item $A_{2\delta-1}$: $n_h = {\delta+h  \choose \delta-h}$
  \item $A_{2\delta}$: $n_h = {\delta+h + 1 \choose \delta-h}$
  \item $D_{2\delta-2}$: $n_h = {\delta+h - 3 \choose \delta-h} +
    2{\delta+h-3 \choose \delta-h-1} + {\delta+h- 2 \choose \delta-h-2}$
  \item $D_{2\delta-1}$: $n_h = {\delta+h - 2 \choose \delta-h} +
    2{\delta+h-2 \choose \delta-h-1} + {\delta+h- 1 \choose \delta-h-2}$
  \item $E_6$: $(n_0, \ldots, n_3) = (5, 10, 6, 1)$
  \item $E_7$: $(n_0,\ldots,n_4) = (2, 11, 15, 7, 1)$
  \item $E_8$: $(n_0,\ldots,n_4) = (7, 21, 21, 8, 1)$
\end{itemize}

Theorem A' asserts these numbers are the multiplicities of the Severi
strata.  We present a heuristic argument computing these multiplicities
directly.  To an ADE singularity $c$ is associated the Dynkin diagram with
of the same name.  Its points form a natural basis of vanishing
cycles \cite{AGV}.  
Generic points in $\VV_h$ correspond to curves with $\delta-h$ 
nodes.  As these singular curves are deformations of $c$,
only vanishing cycles of $c$ can collapse at the nodes.  
Moreover, simultaneously contracting intersecting cycles yields singularities
worse than nodes.   Thus the multiplicity of $\VV_h$ is 
the number of different ways to pick $\delta-h$ disjoint vanishing cycles, 
or equivalently
$\delta - h$ vertices of the Dynkin diagram so that no two are connected.
The resulting numbers are precisely the ones given. 

We expect this argument can be made rigorous by using 
either the description of the discriminant of the versal deformation of
an ADE singularity in terms of the associated Weyl group and root lattice
\cite{AGV}, 
or using Grothendieck's classification of the degenerations of 
ADE singularities in terms of the Dynkin diagrams
\cite{Dem}.  Such results seem to be completely 
out of reach for general singularities.
On the other hand, there are so-called D-diagrams attached to all curve 
singularities \cite{AGV}; we would be extremely interested to learn
of a  procedure to compute the Severi degrees from
the D-diagrams.



\section{BPS numbers} \label{sec:bps}

Our original motivation for considering the series on the left hand side of
Theorem A comes from certain curve counting theories on three-folds.  We now
briefly sketch this connection; further details may be found in the papers of
Pandharipande and Thomas \cite{PT1, PT3}.  

For $Y$ a Calabi-Yau three-fold, a parameter count 
suggests that only finitely many genus $g$ curves
will represent any given homology class $\beta \in \mathrm{H}_2(Y)$. 
In fact, the curves may come in positive-dimensional families; 
nonetheless, the {\em Gromov-Witten invariants} are defined
to be the degree of the virtual fundamental class of the
Kontsevich moduli space of stable maps \cite{B, BF, LT}.
These invariants suffer from two major failings: 
first, they are fractional due to the stack structure on the moduli space; 
second, maps from genus $g$ curves will give rise to undesirable maps
from genus $h > g$ curves due to ramified covers 
and collapsing of components.  Conjecturally, both
problems may be simultaneously eliminated by the Faber-Pandharipande \cite{FP}
multiple cover formula, which repackages the Gromov-Witten numbers into
conjecturally integral invariants $n_{h,\beta}^{GW}(Y)$. 
\[\sum_{\beta \ne 0} \sum_{h = 0}^\infty 
\deg\, [\overline{\mathcal{M}}_g(Y,\beta)]^{\mathrm{vir}}
 u^{2h-2} v^\beta = 
\sum_{\beta \ne 0} \sum_{h = 0}^\infty n_{h,\beta}^{GW}(Y)\, u^{2h-2} \sum_{k \ge 1} 
\frac{v^{k\beta}}{k} \left(\frac{\sin(ku/2)}{u/2}\right)^{2h-2}
\]

Gopakumar and Vafa  \cite{GV} 
explained the physical meaning of the $n_{h,\beta}^{GW}(Y)$.
They consider M2-branes in the M-theory in the space 
$\RR^{4,1} \times Y$, i.e., real 3-dimensional manifolds whose projection to
$Y$ is a complex curve in the class $\beta$ and whose projection to $\RR^{4,1}$
is the world-line of a particle.  Integrating out the Calabi-Yau degrees of
freedom suggests that at low energy, the state space of the particle is the 
cohomology of the relative compactified Jacobian of the family of
embedded curves
in class $\beta$.  
\noindent We write this as $\mathrm{H}^*(\mathcal{M}_{\mathrm{GV}})$. 
The theory transforms under $SO(4,\RR) = 
\mathrm{SU}(2)_L \times \mathrm{SU}(2)_R$; in
particular, this group should act $\mathrm{H}^*(\mathcal{M}_{\mathrm{GV}})$. 
The $\mathrm{SU}(2)_R$ induces a weight grading. Forgetting 
the action and collapsing
the grading -- the odd graded pieces become negative virtual
$\mathrm{SU}(2)_L$ representations -- yields
$\mathrm{H}^*(\mathcal{M}_{\mathrm{GV}})|_L \in \mathrm{Rep}(\mathrm{SU}(2)_L)$.  
Enumerative invariants may be extracted via the prescription
\[\mathrm{H}^*(\mathcal{M}_{\mathrm{GV}})|_L = \sum_h n_{h,\beta}^{GV}(Y) \,
(\CC \oplus V_{\mathrm{std}} \oplus \CC)^{\otimes h}\]
In a certain limit the M2-branes become strings, and the 
$n_{h,\beta}^{GV}(Y)$ are related to the Gromov-Witten invariants by precisely the 
multiple cover formula.  That is, 
$n_{h,\beta}^{GV}(Y) = n_{h,\beta}^{GW}(Y)$. 
The $n_{h,\beta}^{GV}(Y)$ may be calculated
by computing the kernels of powers of the $SU(2)_L$ raising operator, 
which, at least in
simple cases, is cup product with the class of the relative theta divisor.  In 
\cite{KKV}, it is shown how the Abel-Jacobi map expresses
these traces in terms of the 
Euler numbers of relative Hilbert schemes of points.  According to
Kawai \cite{Ka}, the Hilbert schemes should be interpreted as
moduli of D2-D0 branes.

The moduli of D2-D0 branes is made mathematically precise in
the work of Pandharipande and Thomas \cite{PT1, PT3}.  They define
\[P_n(Y,\beta) = \{[\phi:\oO_Y \to F]\,| \mbox{$F$ pure, }
\chi(F) = n ,\, [\mathrm{supp}(F)] = \beta,\,
\dim_\CC F/\phi(\oO_Y) < \infty \} \]
This space carries a virtual class $[P_n(Y,\beta)]^{\mathrm{vir}}$
of dimension zero.  Integers $n_{h,\beta}^{PT}$ are defined by 
\[\log \left(1 + \sum_{\beta \ne 0} \sum_n (-q)^n 
v^\beta \deg [P_n(Y,\beta)]^{\mathrm{vir}} \right)  = 
\sum_{h > - \infty} \sum_{\beta \ne 0} n_{h,\beta}^{PT}(Y)\, \sum_{k\ge 1}  
\frac{v^{k\beta}}{k} \left(q^{-k/2} - q^{k/2}\right)^{2h-2}\]
It is conjectured \cite{MNOP, PT1} that $n_{h,\beta}^{PT}(Y) = n_{h,\beta}^{GW}(Y)$.

We consider only irreducible $\beta$; for these the expression simplifies to 
\[\sum_n (-q)^n  \deg [P_n(Y,\beta)]^{\mathrm{vir}}  = 
\sum_{h > - \infty} n_{h,\beta}^{PT}(Y)\, \left(q^{-1/2} - q^{1/2}\right)^{2h-2}\]
In \cite{PT1,PT3}, it is observed that the $P_n$ carry symmetric
perfect obstruction theories; Behrend \cite{B2} has shown the resulting
virtual degrees can be computed as 
$\deg [P_n(Y,\beta)]^{\mathrm{vir}} = \chi(P_n(Y,\beta), \nu^b)$. 
Here $\nu^b$ is a constructible function depending only on the scheme 
structure in an analytic
local neighborhood, and not on the obstruction theory.  
This makes it possible to discuss the contribution of a single curve.
That is, if $\mathcal{C} \to \Lambda$ is the family
of curves in class $\beta$, then for $\lambda \in \Lambda$ we define 
$n_{h,\beta}^{PT}(\mathcal{C}_\lambda)$ by
\[\sum_n (-q)^n  \,\chi\! \left(P_n(\mathcal{C}_\lambda), 
\nu^b|_{P_n(\mathcal{C}_\lambda)}\right)  = 
\sum_{h > - \infty} n_{h,\beta}^{PT}(\mathcal{C}_\lambda)  
\left(q^{-1/2} - q^{1/2}\right)^{2h-2}\]
Here, the space $P_n(\mathcal{C}_\lambda) \subset P_n(Y,\beta)$ is 
the locus where the sheaf $F$ is (scheme-theoretically) supported
on the curve $\mathcal{C}_\lambda$.  
The function $\nu^b|_{P_n(\mathcal{C}_\lambda)}$ is restricted from $P_n(Y,\beta)$
and {\em is not} intrinsic to $P_n(\mathcal{C}_\lambda)$.  
If $n_h^{PT}:\Lambda \to \ZZ$ is the function 
$\lambda \mapsto n_h^{PT}(\mathcal{C}_\lambda)$, then 
$n_{h,\beta}^{PT}(Y) = \chi(\Lambda,n_h)$.  

Assume $\mathcal{C}_\lambda$ is integral and locally planar. 
Then \cite[Appendix B]{PT3}, 
since $\mathcal{C}_\lambda$ is Gorenstein, we can identify
$P_{n+1-g}(\mathcal{C}_\lambda) = \mathcal{C}_\lambda^{[n]}$.  
It follows from Corollary
\ref{cor:smoothcompare} 
that if the total space of 
the relative compactified Jacobian of
the family $\mathcal{C} \to \Lambda$ is smooth at points over $\lambda$, then 
$\nu^b|_{P_n(\mathcal{C}_\lambda)} = (-1)^{n-1+g+\Lambda}$.  
This certainly holds at points where $\Lambda$ is smooth and 
$\mathcal{C} \to \Lambda$ is locally versal; in fact \cite{FGvS}, 
it suffices for its image in the product of the versal deformations
of the singularities to be transverse to the tangent cone of the
equigeneric stratum.
In this case,  
$(-1)^{\dim \Lambda} n_{h}^{PT}(\mathcal{C}_\lambda) = n_h(\mathcal{C}_\lambda)$; 
the left hand side being the invariants discussed in this article.  

The $n_h^{PT}(\mathcal{C}_\lambda)$ should count the ``number
of curves of geometric genus $h$ occuring at $\lambda$''.  
In the situation we have been discussing, 
Theorem A gives a sense in which this is true.

\section{The HOMFLY polynomial of the link} \label{sec:knots}

A knot is a smooth embedding $S^1 \to S^3$,
considered up to isotopy; more generally, a link is an smooth embedding
of possibly several circles.  
Singularities naturally give rise to links.  
If $p \in C \subset S$ is a point on a curve on a surface, 
and $B_\epsilon(p)$ is a small ball containing $p$, then we write
$\mathrm{Link}(C,p)$ for  
$C \cap \partial B_\epsilon(p) \subset \partial
B_\epsilon(p)$.  
Data about the singularity is reflected in the topology of the link; 
for instance, 
the link is trivial iff the $p$ is a smooth point, and the number of components 
of the link is equal to the number of analytic local branches at $p$. 
In fact \cite{Z}, the link determines
the equisingularity class of the germ of $C$ at $p$. 
For discussions of the interplay between singularities and knots,
see \cite{M, AGV, Wa}.

A central project of knot theory is the classification of knots and links by
means of invariants.  Given the close relationship between a singularity
and its link, one may ask 
what various topological invariants of the link capture about
the geometry the singularity, and, conversely, what algebro-geometric invariants
say about the topology.  For example, Campillo, Delgado, and Gusein-Zade have
proven that the multivariate Alexander polynomial of the link is a certain graded
Euler number of the ring of functions at the singular point \cite{CDG}. 
It is known that the link type in
turn can be recovered from the multivariate Alexander polynomial \cite{Y}. 

There is a generalization of the (usual univariate) 
Alexander polynomial, 
variously called the skein, Jones-Conway, HOMFLY, or HOMFLY-PT
polynomial \cite{HOMFLY}.  We denote it by $\mathbf{P}$. 
It associates an element of $\ZZ[a^{\pm 1}, z^{\pm 1}]$ to any oriented link,
and is characterized by its
behavior when strands of the link pass through one another:  
\begin{eqnarray*}
  \label{eq:skein1}
  a^{-1} \, 
  \mathbf{P}(\overcrossing)  -
  a \, \mathbf{P}(\undercrossing)  & = & z
  \, \mathbf{P}(\smoothing) \\
  \label{eq:skein2} 
  a^{-1}-a & = & 
  z\,  \mathbf{P}(\bigcirc) 
\end{eqnarray*}
We write $P_\infty \in \ZZ[z^{\pm 1}]$ for the coefficient of the lowest power
of $a$. 

Suppose $C$ is rational with a unique singularity at $p$. 
Oblomkov and the present author \cite{OS} 
have conjectured a relation between
the Hilbert schemes of points on $C$ and 
the HOMFLY polynomial of the link of $C$ at $p$.  Here we state only
its specialization to $P_\infty$:

\begin{conjecture} \label{conj:OS} (Oblomkov--Shende \cite{OS}.)
Let $p \in C$ be a point on a locally planar curve; let $c$ denote
the analytic germ at this point.  Let $c$ have 
have $b$ branches and contribute $\delta$ to the arithmetic genus.
Then, 
\[P_\infty(\mathrm{Link}(C,p)) = \sum_{h=0}^\delta n_h(c) z^{2h-b}\]
\end{conjecture}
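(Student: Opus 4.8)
The plan is to prove the identity by computing both sides from the combinatorial data attached to the singularity and matching them. I would first put the statement in a branch-free, Hilbert-scheme form. Combining Definition \ref{def:localbps} with the substitution $z^2 = q^{-1}(1-q)^2$ introduced in the proof of Proposition \ref{prop:bpsmultiplicativity}, and using Corollary \ref{cor:singsupport} to justify the lower limit $h=0$, the right-hand side becomes
\[
\sum_{h=0}^\delta n_h(c)\, z^{2h-b} \;=\; z^{-b}\,(1-q)^b\,q^{-\delta}\sum_{n=0}^\infty q^n\,\chi(c^{[n]}),
\qquad z^2=q^{-1}(1-q)^2.
\]
Thus it suffices to show that this series of Euler numbers of punctual Hilbert schemes computes the lowest-$a$ coefficient $P_\infty$ of the HOMFLY polynomial of $\mathrm{Link}(C,p)$. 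The normalization is consistent at the endpoint: for a smooth point $b=1$, $\delta=0$, $n_0=1$, and the right-hand side equals $z^{-1}=P_\infty(\bigcirc)$.

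Next I would produce a purely combinatorial model for each side. On the geometric side I would stratify $c^{[n]}$ by the valuation profile of its ideals, using the operator $\ord$ and the value-semigroup machinery already developed in Lemma \ref{lem:transversetoideals}: a colength-$n$ ideal $\tilde I$ is recorded by the semigroup-module $\ord(\tilde I)$, each resulting stratum is an affine space, and $\chi(c^{[n]})$ is therefore a count of such modules graded by length. This expresses $\sum_n q^n\chi(c^{[n]})$ as an explicit sum over the semigroup, hence over the Newton/Puiseux data of the branches of $c$. On the knot side, $\mathrm{Link}(C,p)$ is the iterated torus link determined by the \emph{same} Newton data, presented as the closure of an explicit positive braid. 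Here I would invoke Jaeger's state-sum model for $\mathbf{P}$ \cite{Ja}, which writes the polynomial as a weighted sum over admissible states on the braid diagram, and extract the states contributing to the lowest power of $a$, whose weights are monomials in $z$.

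The heart of the proof, and the step I expect to be the main obstacle, is matching these two expansions. Both the semigroup-module strata and the lowest-$a$ Jaeger states are organized by the iterated structure of the singularity: on the geometric side by the sequence of blow-ups encoded in the proximity (Enriques) data, and on the knot side by the cabling operations that assemble the iterated torus link from its Puiseux pairs. What I would aim for is a bijection between lowest-$a$ states and valuation profiles carrying the $z$-grading of Jaeger's model to the grading $2h-b$ on the Hilbert-scheme side — or, equivalently, a single recursion satisfied by both generating functions under cabling and blow-up. Establishing this compatibility is delicate precisely because Jaeger's model and the punctual Hilbert scheme arise from entirely unrelated constructions: one must control how states restrict under cabling and match this, degree for degree, to the behaviour of $\ord(\tilde I)$ under the corresponding blow-ups, including the bookkeeping of the $z^{-b}$ shift that reflects that the number of link components equals the number of branches $b$. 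A successful argument would either exhibit this bijection explicitly or verify the common recursion, with the unknot/smooth-point computation above serving as the base case.
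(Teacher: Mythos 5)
First, be aware that the statement you set out to prove is stated in the paper as a \emph{conjecture} (the Oblomkov--Shende conjecture), and the paper does not prove it: Section \ref{sec:knots} only sketches a speculative strategy, and the author explicitly quotes the referee's assessment that the section ``is rather speculative, unfinished, and at best has the status of a possible approach that might be tried.'' So there is no proof in the paper to compare against; the honest comparison is between two incomplete strategies.

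Your proposal shares its two endpoints with the paper's sketch --- the reduction of the right-hand side to the generating function $\sum_n q^n\chi(c^{[n]})$ via Definition \ref{def:localbps} and Corollary \ref{cor:singsupport}, and the use of Jaeger's circuit-partition state sum to expand $P_\infty$ of a positive braid presentation of the link --- but diverges in the middle. The paper routes through Theorem A$'$, replacing $n_h(c)$ by the multiplicity $\deg_{[c]}\VV_h$ of the Severi stratum, and then attempts to match admissible circuit partitions in $A_{n,r}$ with the points of $\VV_{\delta-r}\cap\DD^r$ by tracking, under braid monodromy, which ramification points of a generic projection collide into the $r$ nodes; it verifies this only for $r=0,1$ and concedes that both the path-dependence of admissibility and the surjectivity/injectivity of the assignment are open. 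You instead stay on the Hilbert-scheme side, stratifying $c^{[n]}$ by the semigroup module $\ord(\tilde I)$ and hoping for a cabling/blow-up recursion shared by both generating functions. Neither route is completed, and the genuine gap in your proposal is the one you yourself flag: the bijection (or common recursion) between lowest-$a$ Jaeger states and valuation profiles \emph{is} the content of the conjecture, and nothing in your outline constructs it; the unknot base case does not constrain it. A secondary gap: your assertion that each stratum of $c^{[n]}$ with fixed $\ord(\tilde I)$ is an affine space is not furnished by Lemma \ref{lem:transversetoideals} (which uses the semigroup only to produce subspaces transverse to all ideals of bounded colength) and is a nontrivial claim --- established for unibranch plane singularities in \cite{OS}, but requiring proof in the multi-branch generality you invoke. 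As written, your text is a research plan on the same footing as the paper's Section \ref{sec:knots}, not a proof.
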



Theorem A' gives an enumerative interpretation of the coefficients on the
right hand side.  One may ask whether any such meaning exists for 
the analogous coefficients on the left hand side.  We will
find one in the work of Jaeger \cite{Ja}.

Recall that the braid group is $\pi_1(\mathrm{Conf}_n(\CC),\star)$, where
\[\mathrm{Conf}_n(\CC) = \{\,n \,\mbox{unlabelled, distinct points in } \CC\, \}\]
At the basepoint $\star\in \CC^{(n)}$, we label the $n$ points as
$p_1, \ldots p_n$.  The braid group is generated
by $\tau_1,\ldots,\tau_{n-1}$, where $\tau_i$ is the 
counter-clockwise half-twist interchanging $p_i$ and $p_{i+1}$ while
leaving all other points fixed.
Their inverses are the analogous clockwise half-twists.  The relations are 
generated by $\tau_i \tau_j = \tau_j \tau_i$ if $|i-j| \ne 1$ and 
$\tau_i \tau_{i+1} \tau_i = \tau_{i+1} \tau_i \tau_{i+1}$.  
A braid may be ``closed'' to form an oriented link; this is done by 
associating to a loop 
$S^1 \to \CC^{(n)}$ its evaluation graph in the solid torus
$S^1 \times \CC$, and then embedding the solid torus in the usual
way into $S^3$.  The orientation lifts from the orientation of $S^1$. 
That any link may be obtained in this manner is
a classical theorem of Alexander \cite{Al}.

We now describe Jaeger's formula.  Fix some sequence
$\tau_{i_1}^{\pm 1} \ldots \tau_{i_N}^{\pm 1}$.  We denote both the sequence
and its product braid by $\beta$.   Consider now the set of all sequences
formed from $\beta$ by replacing some of the 
$\tau_i$ with symbols ${\not} \tau_i$ and likewise 
some $\tau_i^{-1}$ with ${\not} \tau_i^{-1}$. Jaeger calls these
``circuit partitions''.  Such a sequence determines an element of the
braid group by viewing all ${\not} \tau_i^{\pm 1}$ as identity elements. 

Consider tracing through the braid closure of the 
new sequence in the following manner.  Start at the point 
$(1,p_1) \in S^1 \times \CC$, and move according to the orientation lifted
from the circle.   While travelling, keep track of the {\em strand number},
which begins at $1$ and when passing $\tau^{\pm 1}_i$ is changed by
the transposition $i \leftrightarrow i+1$.  
Continue until returning to $(1, p_1)$.  If there are multiple link components,
now jump to the
first point $(1,p_k)$ which has not yet been encountered, set the strand
number to $k$, and continue. 
Along this path, each of the half-twists or removed half-twists is encountered
twice.  The sequence is {\em admissible} if 
the first encounter of a given ${\not} \tau$ (resp. ${\not} \tau^{-1}$) 
has lower (resp. higher) strand number
than the second.

We write $A(\beta)$ for the set of admissible sequences.  
Denote by $w(\beta)$ the {\em writhe}, i.e., the
number of $\tau$ minus the number of $\tau^{-1}$; it does not depend
on the presentation of the braid.  
Given 
$\pi \in A(\beta)$, let 
$b(\pi)$ denote the number of components of the braid closure of
the braid associated to $\pi$.  
Jaeger proves \cite{Ja}:\footnote{Our expression is slightly different due to 
a different convention for the HOMFLY polynomial.}

\[\mathbf{P}(\overline{\beta}) = a^{w(\beta)} \sum_{\pi \in A(\beta)} 
(-1)^{\# \not{\tau^{-1}}} z^{\# \not{\tau}} a^{n-b(\pi)} \mathbf{P}(\bigcirc)^{b(\pi)} \]

\begin{example}  Consider the braid on $n=2$ strands given
by $\tau_1^3$; the corresponding knot is the right handed trefoil which
is the link of an ordinary cusp.  Then the admissible sequences are 
${\not} \tau_1 {\not} \tau_1 {\not} \tau_1$, 
${\not} \tau_1 {\not} \tau_1 \tau_1$, 
$\tau_1 \tau_1 {\not} \tau_1$, 
${\not} \tau_1 \tau_1 \tau_1$, 
$\tau_1 \tau_1 \tau_1$.  The resulting formula for the
HOMFLY polynomial is 
\[
\mathbf{P}(\mathrm{trefoil}) =
a^{3} (z^3 \mathbf{P}(\bigcirc)^2 + z^2 a \mathbf{P}(\bigcirc) + 
2z \mathbf{P}(\bigcirc)^2  + a \mathbf{P}(\bigcirc)) =  
a^2 (2- a^2 + z^2) \mathbf{P}(\bigcirc)
\]
Thus $P_{\infty}(\mathrm{trefoil}) = 2z^{-1} + z$, matching 
the $n_0(\mathrm{cusp}) = 2$ and $n_1(\mathrm{cusp}) = 1$
observed in Corollary \ref{cor:nodalcubic}.
\end{example}

Henceforth we discuss only {\em positive} braids, i.e., those
which are products of counter-clockwise half twists.  The
description of links of singularities as iterated torus knots
(see e.g. \cite{Wa}) yields positive braid presentations.  
For such braids the writhe $w$ is just the number of twists appearing, 
and it can be seen from Jaeger's formula that the number $w - n$ is 
an invariant of the closed braid. 
Since $\tau^{\pm 1}$ changes the number of link components by $1$, 
any circuit partition $\pi$ with $b(\pi) = n$ must have an even number
of $\tau^{\pm 1}$. Denote the set of admissible circuit partitions
with $2r$ half twists by
$A_{n,r}(\beta)$.   Counting link components, 
we see that 
$n - b(\beta) \le w - 2r$.  Restricting Jaeger's formula to 
the lowest degree term in $a$,
\[P_{\infty}(\overline{\beta}) \,\, = \!\!\! \sum_{r = 0}^{(w+b- n)/2} 
 \!\!\!\!\# A_{n,r}(\beta)\, z^{w - n - 2r}\]

\begin{remark}  
$P_\infty(L)$ has non-negative coefficients if $L$ 
admits a positive braid presentation.  Thus 
Conjecture \ref{conj:OS} predicts that 
$n_h(C) \ge 0$.  The identification of the $n_h(C)$ as multiplicities
establishes this positivity. 
\end{remark}

\begin{example}  Consider the braid on $n=3$ strands given
by $(\tau_1 \tau_2)^4$; the corresponding knot is the right handed (3,4) torus
knot, which is the link of the E6 singularity.  Let us abbreviate
$\tau_1 = \tau$ and $\tau_2 = \sigma$.  The admissible sequences $\pi$
with $b(\pi) = 3$ are: 
\[
\begin{array}{c|l}
  T_{3,4} & \mbox{admissible sequences} \\
  \hline
  A_{3,0} & \begin{array}{l} ({\not} \tau {\not} \sigma)^4 \end{array} \\
  \hline
  A_{3,1} & 
  \begin{array}{l}
    (\tau {\not} \sigma)^2 ({\not} \tau {\not} \sigma)^2,
    ({\not} \tau {\not} \sigma)(\tau {\not} \sigma)^2
    ({\not} \tau {\not} \sigma),
    ({\not} \tau {\not} \sigma)^2(\tau {\not} \sigma)^2 \\
    ({\not} \tau  \sigma)^2 ({\not} \tau {\not} \sigma)^2,
    ({\not} \tau {\not} \sigma)({\not} \tau  \sigma)^2
    ({\not} \tau {\not} \sigma),
    ({\not} \tau {\not} \sigma)^2({\not} \tau  \sigma)^2 
  \end{array} 
  \\
  \hline
  A_{3,2} & 
  \begin{array}{l}
    (\tau {\not} \sigma)^4 ,     ({\not}  \tau \sigma)^4  \\
    \tau {\not} \sigma \tau {\not} \sigma {\not} \tau
    \sigma {\not} \tau \sigma, 
    \tau {\not} \sigma \tau \sigma {\not} \tau \sigma {\not} \tau
    {\not} \sigma, 
    {\not} \tau {\not} \sigma \tau {\not} \sigma \tau 
    \sigma {\not} \tau \sigma,
    {\not} \tau \sigma {\not} \tau \sigma \tau {\not} \sigma \tau {\not}\sigma
    \\
    \tau \sigma {\not} \tau \sigma \tau {\not} \sigma {\not} \tau {\not} \sigma,
    {\not} \tau {\not} \sigma \tau \sigma {\not} \tau \sigma \tau {\not} \sigma,
    {\not} \tau \sigma \tau {\not} \sigma \tau \sigma {\not} \tau {\not} \sigma,
    {\not} \tau {\not} \sigma {\not} \tau \sigma \tau {\not} \sigma \tau \sigma
  \end{array} 
  \\
  \hline
  A_{3,3} & 
    \begin{array}{l}
      (\tau {\not} \sigma \tau \sigma)^2 ,
      (\tau \sigma  {\not} \tau \sigma)^2 ,
      (\tau \sigma)^3 {\not} \tau {\not} \sigma,\,
      {\not} \tau {\not} \sigma (\tau \sigma)^3, 
      {\not} \tau  (\sigma \tau)^3 {\not} \sigma
    \end{array}
    \\ 
  \end{array}
\]
Jaeger's formula gives 
$P_\infty(T_{3,4}) = z^{-1} + 6 z + 10 z^3 + 5 z^5$, matching
the values for $n_h(E_6)$ in Section \ref{sec:examples}. 
\end{example}

\begin{lemma}
  Consider a singularity with $b$ analytic local branches,
  delta invariant $\delta$, and Milnor number $\mu$.  Let
  $\beta$ be a positive braid presentation of the 
  link of the singularity, with $n$ strands and $w$ crossings. 
  Then $\mu = w - n + 1$, or equivalently, 
  $2 \delta = w - n + b$. 
\end{lemma}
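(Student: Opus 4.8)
The plan is to prove the first equality $\mu = w - n + 1$ directly; the equivalent form $2\delta = w - n + b$ then follows immediately from Milnor's relation $\mu = 2\delta + 1 - b$ (\cite[Theorem 10.5]{M}), already recalled in the introduction, since $2\delta = \mu + b - 1 = (w-n+1) + b - 1 = w-n+b$. The idea is to compute the Euler characteristic of the Milnor fibre $F$ of the singularity in two ways: intrinsically from the Milnor number, and combinatorially from the given braid diagram.

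For the first computation, recall that the Milnor fibre of an isolated reduced plane curve singularity is a smooth \emph{connected} surface with boundary isotopic to $\mathrm{Link}(C,p)$, and that it is homotopy equivalent to a wedge of $\mu$ circles (\cite{M}). Hence $b_0(F)=1$, $b_1(F)=\mu$, and $\chi(F) = 1 - \mu$, where $\mu$ is exactly the Milnor number in the statement.

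For the second computation I would exhibit $\mathrm{Link}(C,p)$ as a fibred link in two different ways and match the fibres. On one hand, the Milnor fibration realizes $F$ as the fibre. On the other hand, Seifert's algorithm applied to the closed positive braid diagram of $\beta$ produces a Seifert surface which, by the theorem of Stallings on homogeneous (in particular positive) braids, is itself a fibre surface of the closed braid. Respecting the standard downward orientation, the $n$ Seifert circles are the $n$ braid strands and each of the $w$ positive crossings contributes a single twisted band; the surface is thus built from $n$ disks and $w$ one-handles, so its Euler characteristic is $n - w$. Since the fibre of a fibred link in $S^3$ is unique up to isotopy (Neuwirth--Stallings), this Seifert surface is isotopic to the Milnor fibre $F$, giving $\chi(F) = n - w$.

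Comparing the two computations yields $1 - \mu = n - w$, i.e. $\mu = w - n + 1$, and the form involving $\delta$ follows as above. The crux of the argument, and its only nonformal input, is the identification of the Seifert-algorithm surface of the positive braid with the Milnor fibre: this combines Stallings' fibredness of positive braid closures (which supplies both the explicit fibre and its Euler characteristic $n-w$) with the topological invariance of the fibre of a fibred link (which forces it to coincide with the fibre of the Milnor fibration of the same link, since $\overline{\beta}=\mathrm{Link}(C,p)$ by hypothesis). I expect the careful bookkeeping in Seifert's algorithm, and the appeal to uniqueness of the fibration, to be where the topological subtleties concentrate; everything else is a routine Euler-characteristic count together with the already-cited relation of Milnor.
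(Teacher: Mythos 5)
Your argument is correct, but it takes a genuinely different route from the paper. The paper's proof is a one-line appeal to the machinery already set up in that section: specialize Jaeger's state-sum formula for $\mathbf{P}$ to the Alexander polynomial and invoke the known relation (from Milnor's book) between the degree of the Alexander polynomial of the link of a singularity and $\mu$. You instead bypass Jaeger entirely and compute the Euler characteristic of the fibre surface of the link in two ways: $\chi(F)=1-\mu$ from Milnor's fibration theorem (the fibre is a connected wedge of $\mu$ circles), and $\chi(F)=n-w$ from the Seifert-algorithm surface of the closed positive braid, identified with a fibre via Stallings' theorem on homogeneous braids and the uniqueness up to isotopy of the fibre surface of a fibred link. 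Both are valid; at bottom they rest on the same topology, since the degree of the Alexander polynomial of a fibred link equals $b_1$ of its fibre, which is how Milnor's $\deg\Delta=\mu$ is proved in the first place. What the paper's route buys is brevity and internal coherence (Jaeger's formula is the protagonist of the section, and the specialization also explains why $w-n$ is an invariant of the closed braid, a fact used just above the lemma); what your route buys is self-containedness and transparency about where the identity really comes from. The one place you should be slightly more careful is the uniqueness step when $b>1$: for a link of several components you need that the Milnor fibration and the Stallings fibration represent the same class in $H^1$ of the link complement (both fibres are Seifert surfaces for the oriented link, so both classes evaluate to $1$ on every meridian), after which uniqueness of the fibration in a fixed cohomology class gives the isotopy of fibres. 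That is standard and easily supplied, so I would not call it a gap, only a point to make explicit.
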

\begin{proof}
  Specialize Jaeger's formula to the Alexander polynomial, and use known
  properties relating its degree with the Milnor number of the singularity
  \cite{M}.
\end{proof}

Fix a singularity, $c$, and a positive braid presentation $\beta$ of 
its link.  Let $\VV_{\delta-r}^+ \subset \VV(c)$ denote as usual the locus
of deformations of $c$ with $r$ nodes and no other singularities.  
Let $\DD^r$ be a generic disc in $\VV(c)$.
In light of Theorem A, 
Conjecture \ref{conj:OS} is equivalent to the
assertion that $\# A_{n,r} = \VV_{\delta - r} \cap \DD^r$.  
For $r = 0, 1$, this is straightforward.  There is evidently 
a unique element of $A_{n,0}$, and as $\VV_\delta = \VV(c)$, a generic space of 
complementary dimension is a single point.  An element
of $A_{n,1}$ must have two remaining $\tau_i$,
for some fixed $i$.  The admissibility condition ensures that the second
$\tau_i$ must be the first one occuring in the original $\beta$ after the
first $\tau_i$.  Thus $\# A_{n,1} = w - (n-1)$.  This is equal to
the Milnor number
of the singularity, which is in turn equal to the multiplicity of the 
discriminant locus. 

We now speculate about how a bijection may be established 
between $A_{n,r}$ and $ \VV_{\delta - r} \cap \DD^r$.  That is,
how to match deformations with $r$ nodes 
to circuit partitions with $2r$ remaining 
half-twists.  View $c$ as the germ at the origin of some curve $C$ in
$\CC^2$.  Choose
a projection $\CC^2 \to \CC$; it induces a finite map $C \to \CC$.  
In a small punctured disc $\DD^* \subset \CC$ the map is unramified;
say it has degree $n$.  Thus the boundary of the disc gives a $S^1$ family of
$n$ points moving in $\CC$; the closure of the corresponding braid 
$\beta$ is the 
link of the singularity.  Now deform $c$ very slightly to a smooth curve
$c_0$ whose projection to $\DD$ has only simple 
ramification, say at points in $\mathcal{R} \subset \DD$.  
Comparing Euler numbers reveals 
$\#\mathcal{R} = 2 \delta - b + n = w$. 

Above each point in $\DD \setminus \rR$ is a collection of $n$ points in
the fibre, which is $\CC$.  Fix a point $d$ on the boundary of the disk
and let $\star \in \mathrm{Conf}_n(\CC)$ be the points in $c_0$ lying
over it.  We can thus form the braid monodromy \cite{Mo}: 
\[BM:\pi_1(\DD \setminus \rR, d) \to \pi_1(\mathrm{Conf}_n(\CC), \star)\]  
The image of 
a loop containing no ramification points is the trivial braid; 
the image of the loop $\partial \DD$ containing all the ramification points is 
a braid whose
closure is the link of the singularity.  The image of 
a loop containing exactly one ramification point
is a braid which interchanges two points in the fibre 
by a positive half-twist.  
The description of $\beta$ as an iterated torus link gives a 
positive braid presentation on $n$ strands; this presentation
must have exactly $w = \mu + n - 1$ half-twists.  As 
$w = \# \mathcal{R}$, we find it plausible that there exists
a decomposition $\partial \DD = \ell_1 \cdots \ell_w$ into loops 
$\ell_j$ containing one ramification point each, such that 
$BM(\ell_j) \in \{\tau_1,\ldots,\tau_{n-1}\}$. 
Fix such a decomposition.

Consider an intersection point of a generic hyperplane with 
$\VV_{\delta-h}$.  This corresponds to a curve 
$c_h$ with exactly $h$ nodes. 
By genericity $c_h$ projects to $\DD$ with only simple ramification, 
and with no nodes over the ramification points.
We denote the ramification points by $\mathcal{R}(c_h) \subset \DD$, 
and the images of the nodes by $\mathcal{N}(c_h)$. 
Evidently $\# \mathcal{R} - \# \mathcal{R}(c_h) = 2h$. 
We again have the braid monodromy,
\[BM:\pi_1(\DD \setminus (\rR(c_h) \cup \nN(c_h))) \to 
\pi_1(\mathrm{Conf}_n(\CC),\star)\]
Choose a path in $\VV(c)$ from $c_0$ to $c_h$.  Traversing this path,
some ramification points will remain ramification points and others will
collide to form nodes; thus we have a map 
$\phi: \rR \to \rR(c_h) \cup \nN(c_h)$.  
We define a circuit partition $\pi(c_h)$ by taking the sequence
$BM(\ell_1) \cdots BM(\ell_w)$ and replacing the
$BM(\ell_i)$ for $i \notin \phi^{-1}(\nN(c_h))$ with ${\not} \tau$ terms.
The chosen path induces an inclusion 
$\pi_1(\DD \setminus (\rR(c_h) \cup \nN(c_h))) \hookrightarrow
\pi_1(\DD \setminus \rR)$ compatible with the braid monodromy; by
construction the braid associated to $\pi(c_h)$ comes from a loop
in $\pi_1(\DD \setminus (\rR(c_h) \cup \nN(c_h)))$ which goes around
all the nodes and none of the ramification points.  Thus the braid
closure has $n$ components.

The admissibility of $\pi(c_h)$ presumably depends on the 
path chosen from $c_0$ to $c_h$; we do not know how to choose 
paths in a systematic way.  Even having done this, one
must somehow show every admissible circuit partition occurs exactly once.  
We leave the further study of these matters to future work.

\end{document}